\documentclass[12pt]{amsart}
\usepackage{amsmath, amscd}
\usepackage{amsfonts}
\usepackage{amssymb}
\usepackage{amsthm}
\usepackage{upref}
\usepackage{multirow}
\usepackage{graphicx}
\usepackage{subfloat}
\usepackage{subfig}
\usepackage{url}
\usepackage{mathtools}
\usepackage{pinlabel}
\usepackage{color}
\usepackage{hyperref}

\newcommand\quotient[2]{\raise1ex\hbox{$#1$}\Big/\lower1ex\hbox{$#2$}}
\newcommand{\gspan}[1]{\left\langle{#1}\right\rangle}
\newcommand{\scl}{\textnormal{scl}}
\newcommand{\cl}{\textnormal{cl}}
\newcommand{\sedges}{$\sigma$-edges}
\newcommand{\sloops}{$\sigma$-loops}
\newcommand{\tedges}{$\tau$-edges}

\newcommand{\Z}{\mathbb{Z}}
\newcommand{\R}{\mathbb{R}}
\newtheorem{thm}{Theorem}[section]
\newtheorem{cor}[thm]{Corollary}
\newtheorem{lem}[thm]{Lemma}
\theoremstyle{remark}
\newtheorem*{rem}{Remark}
\theoremstyle{definition}
\newtheorem{defn}[thm]{Definition}
\theoremstyle{proposition}
\newtheorem{prop}[thm]{Proposition}
\theoremstyle{question}
\newtheorem{ques}[thm]{Question}
\theoremstyle{notation}
\newtheorem{note}[thm]{Notation}
\title{Stable Commutator Length in Amalgamated Free Products}
\author{Tim Susse}
\address{The Graduate Center, City University of New York}
\email{tsusse@gc.cuny.edu}
\begin{document}
\bibliographystyle{amsalpha}

\begin{abstract}
We show that stable commutator length is rational on free products of free abelian groups amalgamated over $\mathbb{Z}^k$, a class of groups containing the fundamental groups of all torus knot complements. We consider a geometric model for these groups and parameterize all surfaces with specified boundary mapping to this space. Using this work we provide a topological algorithm to compute stable commutator length in these groups. Further, we use the methods developed to show that in free products of cyclic groups the stable commutator length of a fixed varies quasirationally in the orders of the free factors.

\end{abstract}
\maketitle

\section{Introduction}
Given a group $G$, the commutator subgroup, $[G,G]$, is the subgroup generated by all commutators: $[g ,h]=ghg^{-1}h^{-1}$, where $g ,h\in G$. For $g\in[G, G]$ its \emph{commutator length} is the minimal number of commutators needed to write $g$. In other words, $\cl_G(g)$ is the word length of $g$ in $[G, G]$. Culler showed in \cite{Culler} that, even in the free group, commutator length can behave in unexpected ways. In particular he showed that $\cl_{F_2}([a,b]^3)\le2$ by deriving the Culler identity: $$[a,b]^3=[aba^{-1},b^{-1}aba^{-2}][b^{-1}ab, b^2].$$ 

Commutator length is not stable under taking powers. Thus,  the \textit{stable commutator length} of $g\in[G, G]$ is defined as $$\scl_G(g)=\displaystyle\lim_{n\to\infty} \displaystyle\frac{\cl_G(g^n)}{n}.$$ Since commutator length is subadditive, this limit always exists, and further $\scl_G(g^n)=n\cdot\scl_G(g)$. Where there is no chance for confusion we will drop the subscript $G$. 

We can interpret commutator and stable commutator length as a ``rational covering genus" for free homotopy classes of loops in a topological space. Let $X$ be a space with $\pi_1(X)=G$. Then, $\cl_G(h)\le g$ if and only if there exists a map $f\colon S\to X$ with $f(\partial S)$ in the homotopy class $h$, where $S$ is a surface of genus $g$. This can be seen by noting that in a surface $S$ with genus $g$ and one boundary component any representative of the free homotopy class of $\partial S$ in $\pi_1(S)$ is a product of $g$ commutators.

\begin{defn}\label{admissible} A map $f\colon S\to X$ of a surface $S$ is called \emph{admissible} for $h\in [G,G]$ if for some representative $\gamma: S^1\to X$ of $h$, we have:
$$\begin{CD}
\partial S @>i>> S\\
@VV\partial fV	@VVfV\\
S^1@>\gamma>> X
\end{CD}$$
such that $\left(\partial f\right)_*\big( [\partial(S)] \big)=n(S)[S^1]$ and the diagram commutes up to homotopy.
\end{defn}

It is shown in \cite[Proposition 2.10]{SCL} that:

\begin{equation}\label{surface}\scl_G(h)=\displaystyle\inf\left\{\displaystyle\frac{-\chi^-(S)}{2n(S)}: f\colon S\to X \textnormal{ is admissible for } h\right\},\end{equation} where $-\chi^-(S)$ is defined as the sum of the Euler characteristics of all components with negative Euler characteristic.

By considering surfaces $S$ with an arbitrary number of boundary components, it is simple to extend our definition of $\scl$ to any formal linear combination of elements of $G$ which is trivial in $H_1(G)$. These chains are the one boundaries of the group and it is convenient to consider $B_1^H(G; \mathbb{R})$, the vector space of one boundaries over $\mathbb{R}$ with the relations $g^n-ng=0$ and $hgh^{-1}-g=0$ for all $g,h\in G$ and $n\in\mathbb{Z}$.

In most cases the range of values $\scl$ takes on remains unknown. The only cases which have been previously handled where $\scl$ is not identically zero are:
\begin{enumerate} \item free groups \cite{Free}; 
\item free products of abelian groups \cite{SSS, Walker:freecyclic};
\item elements of the universal central extension of $Homeo^+(S^1)$ inside of $Homeo^+(\mathbb{R})$ \cite[Chapter 2]{SCL};
\item Stein-Thompson Groups \cite{Zhuang:irrational}. 
\end{enumerate}
Calegari showed in \cite{Free} that stable commutator length is rational in any free group, and he provided an algorithm to compute $\scl$ on any finite dimensional rational subspace of one boundaries of $F_r$, showing that it is is a piecewise rational linear norm on $B_1^H(F_r)$. Generalizing that, Calegari also showed the analogous result in \cite{SSS} for all free products of abelian groups and similarly that there is an algorithm to do the computation on any finite dimensional rational subspace of the one boundaries. Groups such as these, where $\scl$ is piecewise rational linear on $B_1^H(G)$  are called \emph{PQL} --- pronounced ``pickle".

Calegari has also conjectured that the fundamental groups of all compact 3-manifolds are PQL. In this paper we will prove the following, which gives a positive answer to Calegari's conjecture in the special case of torus knot complements:\\

\noindent\textbf{Theorem \ref{AFP}.} \textit{Let $G=A \ast_{\mathbb{Z}^k} B$, where $A$ and $B$ are free abelian groups of rank at least $k$. Then stable commutator length is a piecewise rational linear function on $B_1^H(G)$, and furthermore $\scl$ is algorithmically computable on rational chains.}\\

Further, using our methods we prove the following, analogous theorem for collections of free abelian groups amalgamated over a shared subgroup.\\

\noindent\textbf{Theorem \ref{MAFP}.} \textit{Let $\left\{A_i\right\}$ be a collection of free abelian groups of rank at least k. Then stable commutator length is a piecewise rational linear function on $B_1^H(\ast_{\mathbb{Z}^k}A_i)$, where the $A_i$ are amalgamated over a common shared subgroup. Further, $\scl$ is algorithmically computable on rational chains.}\\

Previously, Fujiwara proved in \cite{QHAFP} that the second bounded cohomology of these groups (which is closely related to stable commutator length by the Bavard duality theorem -- see \cite[Theorem 2.70]{SCL}) is infinite dimensional. Generalizations of this were achieved by Bestvina and Fujiwara in \cite{BCMCG} and Calegari and Fujiwara provided the explicit lower bound $\frac{1}{312}$ for stable commutator length in these groups in \cite{SCLhyp} for words which are not conjugate to their inverses; as well as lower bounds for word hyperbolic groups and some pseudo-Anosov elements of the mapping class group.

The class of groups in Theorems \ref{AFP} and \ref{MAFP} contains many interesting groups, which arise naturally as central extensions by $\mathbb{Z}^k$ of the groups with torsion studied in \cite{SSS} and more recently in \cite{Walker:freecyclic}. When $k=1$ the class of groups is already very rich: they are central extensions of groups of the form $$H=\left(\mathbb{Z}^n\times\quotient{\mathbb{Z}}{r\mathbb{Z}}\right)\ast\left(\mathbb{Z}^m\times\quotient{\mathbb{Z}}{s\mathbb{Z}}\right).$$ In the special case where $m=n=0$ and $(r,s)=1$ we have the class of all torus knot groups \cite{Hatcher}, as noted above, which were previously known to be PQL \cite[Proposition 4.30]{SCL} using the Bavard duality theorem, as opposed to topological methods.

Using our methods, and applying results of Calegari and Walker from \cite{CalWa:quasipoly}, we also analyze the dynamics of stable commutator length of a single word, as we change the group. By exploring the relationship between groups in Theorems \ref{AFP} and \ref{MAFP} and free products of cyclic groups, we obtain the following.\\

\noindent\textbf{Corollary \ref{qrcyclic}.} \textit{Let $G=\quotient{\Z}{p_1\Z}\ast\cdots\ast\quotient{\Z}{p_k\Z}$ and $w\in F_k$. Then for $p_i$ sufficiently large, $\scl_G(w)$ depends quasirationally on the $p_i$}\\

Previously, Calegari-Walker proved in \cite{CalWa:quasipoly} that stable commutator length in surgery families is quasirational in the parameter $p$. To do this, they showed in subspaces of $B_1^H(F_r)$ spanned by $k$ surgery families $\{w_i(p)\}_{i=1}^k$ that the $\scl$ unit norm ball quasiconverges in those subspaces, and that the vertices depend quasirationally on $p$, modulo identification. The proof requires the use of the algorithm from \cite{SSS}, which is similar to the algorithm presented in section 3.

Additionally, Corollary \ref{quasirationalscl} gives a partial answer to a question of Walker from \cite{Walker:freecyclic}.

\subsection*{Acknowledgements}
I would like to thank Jason Behrstock for his helpful edits and encouragement regarding the contents in this paper. I would also like to thank Danny Calegari and Alden Walker for their interest in this project and for enlightening conversations about it.

\section{Decomposing Surfaces}
Let $A=\gspan{a_1, \ldots, a_n}$ and $B=\gspan{b_1, \ldots, b_m}$ be free abelian groups and let $G=A\ast_{\mathbb{Z}^k} B$. Up to isomorphism, we can assume that $$G=\gspan{a_1, \ldots, a_n, b_1, \ldots, b_m \mid a_1^{r_1}=b_1^{s_1}, \ldots, a_k^{r_k}=b_k^{s_k}, [a_i,a_j]=[b_p,b_q]=1}.$$

Let $X$ be the graph of spaces associated to this amalgamated free product consisting of an $n$-torus (which we will call $T_A$) and an $m$-torus (which we will call $T_B$) connected by a cylinder $T^k\times [0,1]$ where $T^k\times \{0\} \subset A$ represents the free abelian subgroup of $A$ generated by $a_1^{r_1}, \ldots, a_k^{r_k}$ and similarly for $T^k\times\{1\}\subset B$. Fix $\eta \in B_1^H(G)$. Let $f:S\to X$ be a map of a surface to $X$ with the property that $S$ is admissible for $\eta$

We will cut $X$ into pieces along the connecting cylinder and analyze how $S$ maps to each piece. Since $\eta$ is homologically trivial, by replacing occurrences of $a_i$ with $b_i^{s_i}a_i^{1-r_i}$ we can write $\eta$ in a form where the sum of the exponents of each $a_i$ and $b_i$ is zero (note that $H_1(G;\mathbb{Z}) = \gspan{a_i, b_i\mid r_ia_i-s_ib_i=0}$). We can additionally require than any summand of $\eta$ which is conjugate into either $A$ or $B$ is written in only $a_i$'s or $b_i$'s, respecitvely. We call such a representation a \textit{normal form} of $w$. It will be necessary in section 3 that $\eta$ is in this form.

\begin{defn} We say that a loop $\gamma$ representing any $g\in G$ is \emph{tight} if $\gamma\cap T^k\times(0,1)$ is an arc of the form $\{p\}\times(0,1)$ for some fixed $p\in T^k$. \end{defn}

Given $S$ and $f$ as above, we can homotope $f$ so that $f(\partial S)$ is a union of tight loops. Further, $f$ can be homotoped so that $f(S)$ intersects the loop $C=T^k\times\{\frac{1}{2}\}$ transversely, \textit{i.e.}, crossing any arc in $f^{-1}(C)\subset S$ results in switching components of $X\setminus C$.

As in \cite{SSS} consider $S\setminus f^{-1}(C)$. It is necessary to understand the surfaces mapping to each component with tight boundary. Assume that some component of $S\setminus f^{-1}(C)$ is not planar: then we would have a surface with boundary mapping to either $T_A$ or $T_B$ (which has fundamental group either $A$ or $B$, both free abelian --- without loss of generality, assume it is $A$). This map cannot be $\pi_1$-injective. If $S$ has boundary, then the sum of the boundary components must be trivial in $A$, and thus we can replace the component with a vanKampen diagram for the sum --- a planar surface. Thus, we may assume that each component of $S\setminus f^{-1}(C)$ is planar.

In the case where $G=\gspan{a, b\mid a^p=b^q}$ and $p, q$ are coprime we obtain the fundamental group of a torus knot complement. We can realize the set-up described above by replacing $T_A$ and $T_B$ with the interior of genus 1 handlebodies in a Heegaard splitting of $S^3$ and the adjoining cylinder with $A=T^2\setminus K$, which is an annulus.

Using the constructions above we prove the following classical theorem of Waldhausen. Recall that a map of a surface $S\to M$ is \emph{ incompressible} if it is $\pi_1$-injective.

\begin{thm}[Waldhausen \cite{Waldhausen}] Let $M=S^3\setminus K$, where $K$ is a torus knot. If $S$ is a closed, embedded, incompressible surface, then $S$ is a boundary parallel torus.\end{thm}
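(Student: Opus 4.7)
The plan is to exploit the decomposition of $M = S^3 \setminus K$ just described: write $M = (V_1 \setminus K) \cup_A (V_2 \setminus K)$, where $V_1, V_2$ are the two solid tori of a genus-one Heegaard splitting of $S^3$ meeting in the Heegaard torus $T^2$ containing $K$, and $A = T^2 \setminus K$ is the connecting annulus. After a small isotopy I may assume that $S$ meets $A$ transversely and that $|S \cap A|$ is minimal among such isotopic positions; then $S \cap A$ is a finite disjoint union of simple closed curves in $A$.

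The first step is to show every component of $S \cap A$ is essential in $A$. Otherwise, an innermost inessential curve bounds a disk $D \subset A$ with interior disjoint from $S$; by incompressibility of $S$, the curve also bounds a disk $D' \subset S$, and $D \cup D'$ is a $2$-sphere in the irreducible manifold $M$ (irreducibility holding since $K$ is nontrivial), so this sphere bounds a ball across which $S$ can be isotoped to reduce $|S \cap A|$, contradicting minimality. Hence every component of $S \cap A$ is essential in $A$, and therefore parallel to the core of $A$, which on $T^2$ is isotopic to $K$ itself. (The degenerate case $S \cap A = \emptyset$ is ruled out because no closed incompressible surface embeds in a solid torus, as such a surface's $\pi_1$ would have to inject into $\mathbb{Z}$.)

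Next I analyze the pieces of $S$ cut out by $A$. Each is a compact, incompressible, properly embedded surface in a solid torus $V_i$ whose boundary circles all have slope equal to the $(p, q)$-slope of $K$ on $\partial V_i$, which is non-meridional since $K$ is a nontrivial torus knot. Minimality of $|S \cap A|$ forces each piece to be $\partial$-incompressible in $V_i$ as well, for a $\partial$-compression disk would produce an isotopy reducing $|S \cap A|$. The classical classification of incompressible, $\partial$-incompressible, properly embedded surfaces in a solid torus with essential non-meridional boundary then implies that every such piece is a boundary-parallel annulus. Hence $\chi(S) = \sum \chi(\textrm{pieces}) = 0$, and since $S$ is closed and orientable (being two-sided in the orientable manifold $M$), $S$ is a torus.

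Finally, to exhibit $S$ as boundary-parallel I use the boundary-parallel structure of each annular piece to cancel pairs of intersection circles in $S \cap A$ by isotoping $S$ across the product regions they cobound with subannuli of $A$, until only two circles remain. At that point $S$ consists of one $\partial$-parallel annulus in each $V_i$ glued along two curves parallel to $K$, and these can be pushed onto $\partial N(K)$ to complete the isotopy. The main obstacle is the solid-torus classification step; granted that, the remainder of the argument consists of standard innermost-disk, outermost-arc, and product-region isotopy arguments together with irreducibility of $M$.
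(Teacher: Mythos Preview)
Your proof is correct and uses the same overall strategy as the paper: cut $S$ along the separating annulus $A$, argue the pieces must be annuli so that $S$ is a torus, and then show boundary-parallelism. The details differ in two places. First, to conclude that each piece is an annulus you invoke the classification of incompressible, $\partial$-incompressible surfaces in a solid torus with non-meridional boundary; the paper instead argues directly from $\pi_1$: a piece with three or more boundary components would have fundamental group free of rank $\geq 2$, which cannot inject into $\pi_1(V_i)\cong\mathbb{Z}$, contradicting incompressibility. This is more elementary and avoids citing the solid-torus classification as a black box. Second, for boundary-parallelism you isotope through the product regions furnished by the $\partial$-parallel annuli until two intersection circles remain; the paper gives a direct embedded-picture argument, observing that each annular piece ``jumps'' over strands of $K$ on the Heegaard torus, and that jumping more than one strand would force a self-intersection of the embedded surface. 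Your route is the standard textbook one and is cleaner to state; the paper's is more hands-on and self-contained, fitting its aim of illustrating the cutting construction rather than quoting the solid-torus classification.
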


\begin{proof} Let $f:S\to M$ be the embedding of the surface $S$ and let $T$ be the torus boundary of the genus 1 Heegaard splitting of $S^3$ with $K\subset T$. Isotope $f$ so that it is transverse to $A=T\setminus K$ and $f(S)\cap A$ has a minimal number of components. We cut $S$ along $f^{-1}(A)$. As noted above, since $S$ is incompressible each component must be planar.  If any component of $S\setminus f^{-1}(A)$ has more than two boundary components, since each component of $M\setminus A$ is the interior of a genus 1 handle body we obtain a homomorphism from a free group on at least two generators to $\mathbb{Z}$, which cannot be injective. Thus, each component of $S\setminus f^{-1}(A)$ is an annulus and $S$ is a torus. 

Further, $f(S)\cap A$ is an embedded collection of curves in $A$ parallel to $K$ and the surface connects these curves from above and below. Let $C$ be some component of $S\setminus f^{-1}(A)$. Since the number of components of $f(S)\cap A$ is minimal, the boundary components of $C$ split $T$ into two pieces, exactly one of which is enclosed by $C$ and contains $K$. Locally, $C$ ``jumps" over strands of $K$. If $C$ jumps more than one strand, some arc in $\partial C$ would be (locally) enclosed by $C$, but this is impossible if $C$ is embedded. See Figure \ref{Waldhausen} for a diagram illustrating this. Thus, $S$ is boundary parallel.

\begin{figure}[h]

\centering
\labellist

\pinlabel $T$ at 70 150
\pinlabel {\color{blue} Self-intersection} at 330 410
\pinlabel {\color{black} $K$} at 485 115
\pinlabel {\color{red}$f(S)\cap A$} [l] at 425 10

\endlabellist
\includegraphics[scale=0.50]{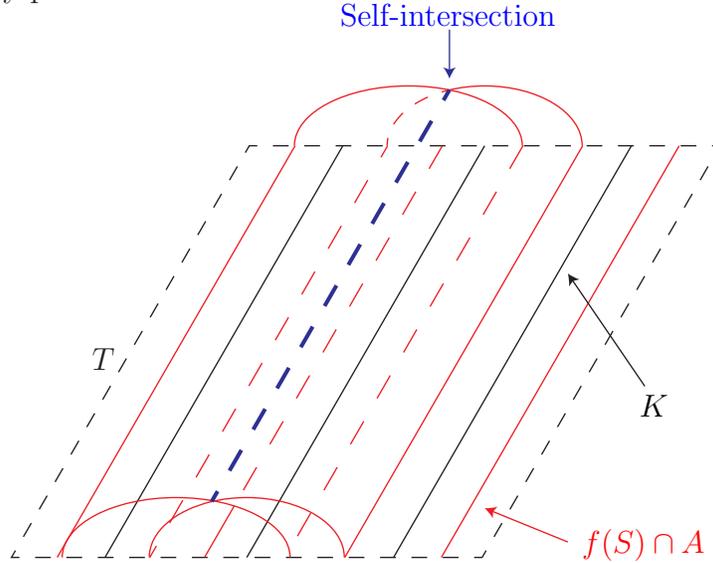}
\caption{A component jumping over more than one strand of a torus knot.}
\label{Waldhausen}
\end{figure}

\end{proof}

$f^{-1}(C)$ is necessarily a properly embedded one-submanfold of $S$. Thus it is a disjoint collection of arcs with endpoints on $\partial S$ and loops in the interior of $S$. It is possible that $f^{-1}(C)$  contains isotopic copies of a curve. In this case, depending on whether the number of copies is odd or even the number of copies can be reduced to either one or zero by cutting out all of the annuli formed by the isotopies and gluing together the new boundary components. Either the two sides of the remaining curve map to opposite components of $X\setminus C$ or the same. In the first case, we keep the remaining curve. In the second, since the map is no longer transverse to $C$, homotope the map to eliminate the loop.

The outline of the proof of Theorem \ref{AFP} is as follows:
\begin{enumerate}
\item Prove that we can assume all loop components of $f^{-1}(C)$ are separating using gluing equations (Proposition \ref{noloop});
\item Parameterize the space of all surfaces using a rational polyhedron;
\item Prove that it is possible to estimate the Euler characteristic of components of $S\setminus f^{-1}(C)$ using a piecewise rational function (Lemma \ref{approx});
\item Given compatible components, show that it is possible to glue them together and solve the gluing equations from step 1 -- we do this by adding additional loops (Lemma \ref{canglue});
\item Show that the Euler characteristic estimate from the third step is still a good estimate, even after adding loops in the fourth step (Lemma \ref{glueapprox});
\item Use linear programming to minimize $-\chi^-(S)$.
\end{enumerate}
 
\subsection{Loop Components}
The following proposition is the main goal of this section, and a formal statement of the first step in the outline above.

\begin{prop} \label{noloop} Let $S$ be a surface and $f:S\to X$ a continuous map. Then, up to replacing $S$ by a surface with higher Euler characteristic, all loop components of $f^{-1}(C)$ are separating. Further, if there are $l$ loop components, then the chain $f_*(\partial S)$ is split into at least $(l+1)$ subchains by the loops, each belonging to $\mbox{span}\left\{a_1^{r_1}, \ldots, a_k^{r_k}\right\}\subseteq H_1(G;\mathbb{R}).$\end{prop}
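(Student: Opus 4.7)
The plan is to address the two assertions separately: the subchain count is essentially a homological consequence of cutting $S$ along the separating loops, whereas eliminating non-separating loops is the substantive step and requires a surgery argument driven by what the outline calls ``gluing equations.''

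For the subchain decomposition, assume first that every loop component of $f^{-1}(C)$ is separating. Cutting $S$ along these $l$ loops produces at least $l+1$ connected components $S_1,\ldots,S_{l+1}$, since each separating cut raises the component count by exactly one. The boundary of $S_j$ consists of (i) a subset of the original tight boundary circles of $S$ (representing elements of $G$) and (ii) one copy of each separating loop bordering $S_j$ (representing elements of $H_1(C)$). Because $f|_{S_j}$ exhibits $S_j$ as a singular $2$-chain in $X$, its boundary vanishes in $H_1(X)=H_1(G)$, giving
\[
\sum_{c \,\subset\, \partial S\,\cap\, S_j} [c] \;=\; -\sum_{\ell' \,\subset\, \partial S_j \,\cap\, f^{-1}(C)} [f(\ell')] \qquad \text{in } H_1(G).
\]
The right-hand side lies in the image of $H_1(C)\to H_1(G)$, which is precisely $\mbox{span}\{a_1^{r_1},\dots,a_k^{r_k}\}$, so each of the $l+1$ subchains cut out by the loops lies in this span.

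To eliminate a non-separating loop $\ell$, I would split on the class $v:=[f(\ell)]\in H_1(C)=\mathbb{Z}^k$. When $v=0$, the loop $f(\ell)$ is null-homotopic in $C$ (since $\pi_1(C)$ is abelian), so it bounds a disk $D\subset C$; cutting $S$ along $\ell$ and capping each of the two new boundary circles by $D$ produces an admissible surface for $\eta$ with $\chi$ increased by $2$ and one fewer loop. The principal obstacle is the case $v\neq 0$, where no disk in $X$ bounds $f(\ell)$. Here I would appeal to the gluing equations obtained by summing the boundaries of all $A$-side components of $S\setminus f^{-1}(C)$ in $H_1(T_A)$: the loop contributions form a signed sum $\sum\epsilon_i v_i$ which, modulo contributions from arc components and from $\partial S$, must vanish. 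Consequently, after passing (if necessary) to a bounded multiple $N\cdot S$ (which leaves admissibility intact by also scaling $n(S)$), the non-separating loops can be partitioned into matched groups whose image classes sum to zero. Each matched group may then be cut and re-paired, using the resulting homological cobordism inside $C$ to exchange the identifications of the new boundary circles and thereby replace non-separating loops by separating ones without altering $\chi$. Iterating the two surgeries ($v=0$ capping and $v\neq 0$ re-pairing) yields a surface in which every loop component is separating, whereupon the first part of the argument supplies the subchain decomposition.
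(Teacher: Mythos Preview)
Your subchain argument and the $v=0$ surgery are sound; the gap is entirely in the $v\neq 0$ case.

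First, passing to $N\cdot S$ multiplies $\chi$ by $N$, so for $\chi(S)<0$ this does not produce a surface of higher Euler characteristic, contrary to what the proposition permits. Second, and more seriously, the partitioning claim does not follow from what you wrote. Summing the $A$-side component equations gives $\sum_i l_i+\sum_j l_{C_j}=0$, so the loop classes sum to \emph{minus} the total arc contribution, not to zero; there is no reason any subcollection of non-separating loops should have image classes summing to zero in $H_1(C)$, and taking copies only multiplies this nonzero sum by $N$. Even granting such a zero-sum collection, ``cut and re-pair'' is not in general a move that converts non-separating loops into separating ones, and you give no mechanism explaining why it would do so here.

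The idea you are missing is that the generalized winding numbers on the \emph{arc} $\sigma$-edges are not invariants of $S$: they depend only on $f$, and one may alter $f$ on the $\sigma$-edges freely provided each component's homology equation remains satisfied. The paper packages all of these component equations as a single linear system in the variables $l_i$ and $l_{C_j}$ and proves a structural lemma (Lemma~\ref{matrix}) about its row reduction. Row reduction is then interpreted geometrically: a fully reduced row involving only the variable $l_C$ corresponds to a union of components, glued along shared $\sigma$-edges, whose sole remaining $\sigma$-edge is that loop --- which forces the loop to be separating. If no such row arises, $l_C$ is a free variable in the reduced system, so one may choose an integral solution with $l_C=0$; the loop is then null-homotopic in $C$ and can be removed (your $v=0$ case). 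Thus the dichotomy ``separating versus $v=0$'' is not a property of the given map but of the solution space of the gluing equations, and establishing it requires the linear algebra of Lemma~\ref{matrix} rather than cut-and-paste on $S$ alone.
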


The boundary components of the closures of a component of $S\setminus f^{-1}(C)$ come in two types, which alternate. Using the notation of \cite{SSS}, components of $f^{-1}(C)$ will be called \emph{\sedges}, which alternate with \emph{\tedges}, which come from $\partial S$. For convenience we will distinguish between components of $f^{-1}(C)$ depending other where they are properly embedded arcs or simple closed curves. We will call the latter \emph{\sloops}. Further, boundary components  of $S$ have two forms. Either:\begin{enumerate}
\item the image is contained entirely in the torus $T_A$ or $T_B$, which we will call \textit{Abelian} loops;
\item the image alternates between tight loops in $T_A$ and tight loops in $T_B$.
\end{enumerate}

In order to consider the Abelian loops, pretend that there is a $\sigma$-edge on each Abelian loop called a \emph{dummy} edge; any other is called \emph{genuine}. 
 
Consider each component of $S\setminus f^{-1}(C)$. The division between $\sigma$- and $\tau$-edges gives a polygonal structure on each planar surface. From a collection of pieces, it must be possible to determine the Euler characteristic of $S$. To accommodate the gluings, it is necessary to use an Euler characteristic with corners, which is simply a version of an orbifold Euler characteristic. The following formula comes from thinking about each $\sigma$-edge intersecting $\partial S$ at a right angle.

\begin{defn} Given a surface with a polygonal structure on its boundary, let $c(S)$ be the number of corners. Its \textit{orbifold Euler characteristic} is given by: $$\chi_o(S)=\chi(S)-\displaystyle\frac{c(S)}{4}.$$\end{defn}

On a component of $S\setminus f^{-1}(C)$, the sum of all the \tedges, need not be zero in homology; it must, however, be an element of $\gspan{a_1^{r_1}, \ldots, a_k^{r_k}}$. This homology deficit must be made up in the genuine \sedges, since $f$ describes how the image of the boundary bounds a surface in $X$, so the sum of the images of all the $\sigma$- and $\tau$- edges in a component must be zero. Since $f(\partial S)$ is tight, each vertex maps to the same point, thus there is an element of $\mathbb{Z}^k$ attached to each genuine $\sigma$-edge and $\sigma$-loop describing how it wraps around $C$. We think of this element is a ``\emph{generalized winding number}" for the edge.

Encoding this information, label the \sedges\ $1, \ldots n$ and let \\$l_i=(l_{i,1}, \ldots, l_{i, k})$ be the element of $\mathbb{Z}^k$ attached to the $\sigma$-edge $i$. Further, let $l_{C_j}$ be the element attached to the loop components of $f^{-1}(C)$. Each component of $S\setminus f^{-1}(C)$ describes an equation in homology. In particular, the sum of all the \sedges\, and \sloops\, in a component must cancel out the sum of all the \tedges.

There are two systems of linear equations, one for components mapping to the torus $T_A$ and one for components mapping to the torus $T_B$. In each system each $\sigma$-edge is used only once, and so appears in exactly one equation. Since gluings are orientation reversing, it appears with the coefficient $1$ in the system for $T_A$ and $-1$ in $T_B$. Further, if $w$ is in a normal form, adding up all of the equations for either system gives $\sum l_i + \sum l_{C_j}=0$, since the sum of all \tedges\ mapping to $T_A$ is zero, and similarly for $T_B$. 

Proposition \ref{noloop} is thus a statement about integral solutions of this system. The following lemma guarantees us integral solutions to the system and is the technical step needed to prove that proposition.

\begin{lem} \label{matrix}
Consider a linear system of equations as follows $$\begin{bmatrix} &\Large{M_A}&\\ & &\\&\Large{M_B}& \end{bmatrix} v= \begin{bmatrix} &\Large{N_1}&\\& & \\&\Large{N_2}&\end{bmatrix},$$
such that all entries of $M_A$ and $M_B$ are either 0 or 1; each column in $M_A$ and each column in $M_B$ has a 1 in exactly one place; $N_1$ and $N_2$ are integral vectors and the system is consistent.
Letting $A_j$ be a row from $M_A$ and $B_i$ a row from $M_B$, it is possible to row reduce the matrix so that each fully reduced row is of the form 
$$\displaystyle\sum\epsilon_{i} B_{i} - \displaystyle\sum\delta_{j} A_{j},$$ where $\epsilon_i, \delta_j\in\{0,1\}$. Consequently, the system has an integral solution. \end{lem}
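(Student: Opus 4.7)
The plan is to reinterpret the system combinatorially as a bipartite graph and read both assertions off its connected-component structure.

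First I would build a bipartite multigraph $\Gamma$ whose vertex set consists of the rows $A_j$ of $M_A$ on one side and the rows $B_i$ of $M_B$ on the other, and whose edges are the columns of the combined matrix: the column corresponding to a variable $v_k$ becomes an edge joining the unique $A_j$ with a $1$ in column $k$ to the unique $B_i$ with a $1$ in column $k$ (these exist and are unique by hypothesis). In this picture, the nonzero entries of any row are exactly the edges incident to its vertex, and the variables of the system are in bijection with the edges of $\Gamma$.

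Next I would characterize the integer kernel of the row map. A combination $\sum_j \alpha_j A_j + \sum_i \beta_i B_i$ puts the coefficient $\alpha_j + \beta_i$ on the variable sitting on the edge $(A_j, B_i)$, so it vanishes precisely when $\alpha_j = -\beta_i$ along every edge. Thus the function sending $A_j \mapsto \alpha_j$ and $B_i \mapsto -\beta_i$ is locally constant on $\Gamma$, hence constant on each connected component. It follows that the integer left null space is spanned by the component indicators $R_K := \sum_{B_i \in K} B_i - \sum_{A_j \in K} A_j$, as $K$ runs over the connected components of $\Gamma$. Each $R_K$ already has the required form with $\epsilon_i, \delta_j \in \{0,1\}$, which gives the first assertion.

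For integer solvability I would work one component $K$ at a time. Choose a spanning tree $T_K$ of $K$, set every non-tree-edge variable to $0$, root $T_K$ at any vertex, and process the non-root vertices from the leaves inward: at each such vertex $v$ every variable appearing in $v$'s equation is already determined except for the single tree edge connecting $v$ to its parent, so that variable is forced to be an integer (it is the integer right-hand side minus an integer sum). After $|V_K|-1$ steps only the root equation remains, and it is automatically satisfied: summing the $A$-row equations over $K$ counts every edge variable once, as does summing the $B$-row equations, so the dependency $R_K$ forces $\sum_{A_j \in K}(N_1)_j = \sum_{B_i \in K}(N_2)_i$ under the consistency hypothesis, and this identity is exactly what makes the unused root equation compatible with the values already assigned.

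The only real obstacle is verifying this last ``root check'', but once the graph model is set up it is just the matching of the component-indicator dependency $R_K$ with the consistency hypothesis. Everything else, namely the bijection between variables and edges, the local constancy of kernel vectors, and the leaf-peeling on a tree, is routine bookkeeping.
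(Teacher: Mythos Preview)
Your bipartite-graph reformulation is correct and yields a clean, genuinely different proof of integral solvability. The paper instead carries out an explicit Gaussian elimination, keeping the $A$-rows as pivots and reducing the $B$-rows, and tracks that no original row is subtracted twice during the process; from this it concludes that the echelon form has entries in $\{-1,0,1\}$. Your spanning-tree leaf-peeling together with the root-check via the component relation $R_K$ is valid and arguably more structural: it replaces the bookkeeping of ``which row was used where'' by the single observation that a tree has one fewer edge than vertices.

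There is, however, a gap in how you handle the first assertion. The lemma claims that \emph{every} reduced row---in the paper's argument these are the rows $B_k''$ obtained from the $M_B$-block after elimination---can be written as $\sum\epsilon_i B_i-\sum\delta_j A_j$ with $\epsilon_i,\delta_j\in\{0,1\}$, not merely the zero rows. In the paper's worked example none of the reduced $B$-rows vanish, yet each is such a $\{0,1\}$-combination. You have only exhibited the \emph{left-null-space} generators $R_K$ in this form, which is strictly less than what is asserted. This distinction matters downstream: the proof of Proposition~\ref{noloop} reads a nonzero reduced row that isolates a loop variable $l_C$ as a subsurface whose only unglued $\sigma$-edge is that loop, so it is precisely the shape of the nonzero pivot rows that carries the geometric content.

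Your own machinery supplies the fix. The leaf-peeling \emph{is} a row reduction, and the row produced when you process a non-root vertex $v$ is, up to a global sign, the indicator of the subtree $T_v$ hanging below $v$, namely $\sum_{B_i\in T_v}B_i-\sum_{A_j\in T_v}A_j$: every edge internal to $T_v$ cancels, leaving entries $\pm 1$ only on the tree edge from $v$ to its parent and on any non-tree edge with exactly one endpoint in $T_v$. These subtree indicators are the nonzero reduced rows, and they visibly have $\{0,1\}$ coefficients on the original rows. Stating this explicitly, rather than stopping at the kernel description, would close the gap and recover the full statement the paper proves.
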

We provide an example of a matrix of the form described in the lemma, which illustrates the proof of the lemma in the general case.

$$\begin{bmatrix*}[r]
1 & 0 & 0 & 0 & 1 & 0 & 1 & 1 & 0 & 0   \\
0 & 1 & 0 & 0 & 0 & 1 & 0 & 0 & 0 & 0   \\
0 & 0 & 1 & 0 & 0 & 0 & 0 & 0 & 1 & 0   \\
0 & 0 & 0 & 1 & 0 & 0 & 0 & 0 & 0 & 1   \\
1 & 1 & 0 & 0 & 1 & 0 & 1 & 0 & 0 & 1   \\
0 & 0 & 1 & 0 & 0 & 1 & 0 & 1 & 0 & 0   \\
0 & 0 & 0 & 1 & 0 & 0 & 0 & 0 & 1 & 0
\end{bmatrix*}$$

\vspace{0.3cm}

Note that in the notation of Lemma \ref{matrix}, $M_A$ is the first four rows, and $M_B$ is the final three. Row reduce by subtracting the first two rows of $M_A$ from the first row of $M_B$ and proceed downwards, the third row of $M_A$ from the second row of $M_B$ and the fourth row of $M_A$ from the third row of $M_B$. This leaves us with the matrix: $$\begin{bmatrix*}[r]
1 & 0 & 0 & 0 & 1 & 0 & 1 & 1 & 0 & 0   \\
0 & 1 & 0 & 0 & 0 & 1 & 0 & 0 & 0 & 0   \\
0 & 0 & 1 & 0 & 0 & 0 & 0 & 0 & 1 & 0   \\
0 & 0 & 0 & 1 & 0 & 0 & 0 & 0 & 0 & 1   \\
0 & 0 & 0 & 0 & 0 & -1& 0 & -1 & 0 & 1\\
0 & 0 & 0 & 0 & 0 & 1 & 0 & 1 & -1 & 0\\
0 & 0 & 0 & 0 & 0 & 0 & 0 & 0 & 1 & -1
\end{bmatrix*}$$

\vspace{0.3cm}

To finish the reduction, add the first row of $M_B$ to the second, then that row to the third, and finish with the row-reduced matrix $$\begin{bmatrix*}[r]
1 & 0 & 0 & 0 & 1 & 0 & 1 & 1 & 0 & 0   \\
0 & 1 & 0 & 0 & 0 & 1 & 0 & 0 & 0 & 0   \\
0 & 0 & 1 & 0 & 0 & 0 & 0 & 0 & 1 & 0   \\
0 & 0 & 0 & 1 & 0 & 0 & 0 & 0 & 0 & 1   \\
0 & 0 & 0 & 0 & 0 & -1& 0 & -1 & 0 & 1\\
0 & 0 & 0 & 0 & 0 & 0 & 0 & 0 & -1 & 0\\
0 & 0 & 0 & 0 & 0 & 0 & 0 & 0 & 0 & -1
\end{bmatrix*}$$

\vspace{0.3cm}

Notice that no row of $M_B$ was used more than once during the reduction, the full proof relies on the fact that this holds generally.

\begin{proof}[Proof of Lemma \ref{matrix}]
Put $M_A$ is in row-echelon form by switching columns in the matrix. Let $m$ be the number of rows of $M_A$, arrange that the left-most $m\times m$ block of $A$ is an $m\times m$ identity matrix. By switching the rows of $M_B$, put $M_B$ in row-echelon form (although it will not necessarily begin with an identity matrix).

Denote the (original) rows of $M_A$ as $A_j$ and the rows of $M_B$ as $B_i$. No reductions need to be done on the matrix $A$, since it is already in row-echelon form. To reduce a row of $M_B$, first subtract off rows of $M_A$ that share $1$'s with it in the first $m$-columns. Note that since there is only one 1 in each column of $M_B$ and the first $m\times m$ block of $M_A$ is an $m\times m$ identity matrix, if $B_k$ and $B_l$ are reduced by the same $A_j$, then $k=l$.

Now, the first $m$ columns of $M_B$ are eliminated and the reduced matrix $M_B'$ has entries $1$, $0$ and $-1$. Further, each column of $M_B'$ has at most one $1$ and at most one $-1$.  Continue to reduce $M_B'$ by adding together rows (after possibly making some row switches). Let $n$ be the number of rows in $M_B$. $B_1'$ requires no reduction, as its first possible non-zero entry is in the $(m+1)$-st column, after being reduced by the rows of $M_A$. 

If $B_2'$ needs to be reduced, this is done by adding the row $B_1'=B_1''$. Call each fully reduced row $B_i''$. We claim that $B_i'$ does not get reduced by $B_k''$ if $B_k''$ was used to reduce a previous row. Let's say that $B_i'$ and $B_l'$ are both reduced by $B_k''$. Then $B_i'$ and $B_l'$ must both have an entry of $1$ or $-1$ in the left-most entry of $B_k''$, a contradiction to each column having at most one $1$ and one $-1$. Thus, each reduced row is achieved as a sum $$B_k''=\displaystyle\sum_{i=1}^n\epsilon_i B_i - \displaystyle\sum_{j=1}^m \delta_j A_j,$$ where $\epsilon_i, \delta_j\in \{0, 1\}$, since each $B_i'$ is the row $B_i$ minus a sum of rows from $M_A$. In this way, the reduced matrix has entries $1$, $0$, $-1$. Since the original system is consistent and the image vector has integral entries, this is also true of the reduced system, and so there is an integral solution.
\end{proof}

Recall that every component of $S\setminus f^{-1}(C)$ produces an equation -- thus each row of the matrices $M_A$ and $M_B$ corresponds to some component. From the first statement of Lemma \ref{matrix}, we can interpret taking a sum of rows in $M_B$ and $M_A$ as taking a union of the components. In particular, if a row in $M_B$ and a row in $M_A$ have a column where they are both non-zero, then they share a $\sigma$-edge, and in their union we glue along the shared edge. Thus, the lemma says that when reduce the matrix, \textit{each edge is used exactly once.} Further, every reduced row represents some set of glued components (along \sedges) and the remaining $1$'s and $-1$'s represent \sedges\, in the boundary of that.

\begin{proof}[Proof of Proposition \ref{noloop}]
The lemma in the case $k=1$ is equivalent to saying that no row reduction in the proof of Lemma \ref{matrix} will produce an equation $l_C=n$, for $n\neq 0$, or else it is possible to set $l_C=0$ and still get a solution to the system. Since row reductions are geometric, by the comments above, this is the same as saying that there is some union of components mapping to $T_B$ and $T_A$ that results in a surface whose only remaining $\sigma$-edge is our loop. Thus, the loop is separating, and the chain represented by the boundary components from S in this surface plus $a_1^{r_1n}$ must be zero. Thus the sum of those boundary components is in the span of $a_1^{r_1}$ in $H_1(G;\mathbb{R})$.

To prove the general case, repeat the row reduction operations $k$-times, one for each component of the solution vectors (which are in $\mathbb{Z}^k$).

Up to this point, it was implicitly assumed $S$ is connected. If not, repeat the above procedure on each component of $S$ separately to complete the proof.
\end{proof}

\subsection{Parameterizing Surfaces}
We now proceed to parameterize all surfaces $S$, with $f:S\to X$ continuous and $f_*(\partial S)$ in some finite dimensional subspace of $B_1^H(G)$.
\begin{defn} Let $T(A)$ be the set of all \tedges\ coming from $A$.
Similarly, let $T_2(A)$ be the set of ordered pairs of elements of $T(A)$, except whenever $i$ is an abelian loop the only ordered pair including $i$ is $(i, i)$.\end{defn}

Note that elements of $T(A)$ correspond to maximal subwords of components of a chain $\eta$ (or more generally a basis element of the finite dimensional subspace) coming from $A$. Further, $T_2(A)$ corresponds to all possible \sedges\  and the condition on abelian loops corresponds to the addition of dummy edges. We think of an element $(v, w)\in T_2(A)$ as a $\sigma$-edge that begins at the $\tau$-edge $v$ and ends at the $\tau$-edge $w$.

Let $C(A)$ be the real vector space spanned by $T(A)$ and $C_2(A)$ the real vector space spanned by $T_2(A)$. Given a surface $S$ and a continuous map $f:S\to X$, let $v(S)$ be the sum (in $C_2(A)$) of all of the \sedges\ in $S$. We say that $v(S)$ \emph{parameterizes} $S$.

There are two natural maps on $C_2(A)$: $$\partial: C_2(A)\to C(A)$$ which is defined on $T_2(A)$ by $\partial(a, b)=a-b$ and also $$h: C_2(A)\to A\otimes \mathbb{R}$$ defined on $T_2(A)$ by $h(a, b)=\frac{1}{2}(a+b)$. In particular $h(v)$ represents the sum of all of the \tedges\ in the real first homology of $A$. (If a surface $S$ comes from the cutting a surface with boundary along \sedges, then by putting the boundary in normal form we can see that $h(v)=0$.)

In order for a boundary component to close, it must be that $\partial(v)=0$. In particular, every $\tau$-edge appears as a beginning of exactly one $\sigma$-edge and also the end of exactly one, but the two need not necessarily be distinct.

Let $V_A$ be the set of all vectors in $C_2(A)$ such that $h(v)=0$ and $\partial(v)=0$, with all components non-negative. Since it is defined by finitely many equations and inequalities, $V_A$ is the cone on a finite sided rational polyhedron.

The next lemma shows the every integral vector $v\in V_A$ parameterizes a planar surface mapping into $T_A$. Form a graph $\Gamma$ with vertices corresponding to elements of $T(A)$ and edges corresponding to elements of $T_2(A)$ so that the edge identified with $(a,b)$ is a directed edge from the vertex $a$ to the vertex $b$. A vector $v\in V_A$ gives weights on the edges of the graph. Let $|\Gamma(v)|$ be the number of connected components of the graph weighted by $v$, after we throw out edges with zero weight. 

\begin{lem}\label{boundary} For any integral vector $v \in V_A$ there is a planar surface $S$ and a map $f:S\to T_A$ so that the vector in $C_2(A)$ coming from $S$ is $v$ and the number of boundary components of $S$ is $|\Gamma(v)|$. Further, any surface giving rise to the vector $v$ has at least $|\Gamma(v)|$ boundary components.\end{lem}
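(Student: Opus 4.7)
My plan is to treat an integral $v \in V_A$ as specifying a weighted directed multigraph structure on $\Gamma$ (each edge $(a,b) \in T_2(A)$ appearing with multiplicity $v_{(a,b)}$), and to combine the Eulerian circuit theorem with an obstruction-theoretic filling argument in the aspherical torus $T_A$. Both directions of the lemma will then reduce to the combinatorics of closed walks in $\Gamma(v)$.

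For existence, I would first use $\partial(v) = 0$, which says that at every vertex the total weight of incoming edges equals that of outgoing edges; equivalently, $\Gamma(v)$ is balanced. A standard lemma in digraph theory asserts that a balanced, weakly connected directed multigraph is strongly connected and admits an Eulerian circuit. Applying this to each weakly connected component of $\Gamma(v)$ yields exactly $|\Gamma(v)|$ closed walks whose union covers every directed edge with the correct multiplicity. Each walk is an alternating cyclic sequence of $\tau$- and $\sigma$-edges, which I read off as a loop $\gamma_i$ in $T_A$ obtained by concatenating tight $\tau$-loops with arcs along $C$. Because $h(v)=0$, we have $\sum_i [\gamma_i]=0$ in $A \cong H_1(T_A)$. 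Since $T_A$ is a $K(A,1)$-torus with abelian fundamental group, the unique obstruction to extending the disjoint union of the $\gamma_i$ over a planar (genus $0$) surface vanishes, so I can cap off to produce the required planar surface $S$ and map $f\colon S \to T_A$ with $|\Gamma(v)|$ boundary components and $v(S)=v$. (Concretely, $n-1$ boundary loops may be mapped independently, and the $n$-th is forced, by abelianness of $\pi_1(T_A)$ and the homological condition, to map to the prescribed $\gamma_n$.)

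For the lower bound, I would observe that in any surface realizing $v$, each boundary component of $S$ gives an alternating cyclic sequence of $\sigma$- and $\tau$-edges, hence a closed walk in $\Gamma$. Collectively these closed walks use each directed edge $(a,b)$ exactly $v_{(a,b)}$ times, so they partition the edge multiset determined by $v$. Since any closed walk is contained in a single weakly connected component of $\Gamma(v)$, each of the $|\Gamma(v)|$ components must contribute at least one boundary component to $S$, giving the required lower bound. The main obstacle in this argument is the filling step in the existence half: one must justify capping by a \emph{planar} surface rather than some higher-genus surface. This is exactly where $T_A$ being aspherical with abelian $\pi_1$ is essential, reducing the full obstruction to the single $H_1$ condition $h(v)=0$; everything else is combinatorial.
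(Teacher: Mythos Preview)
Your argument is correct and is essentially the approach of \cite[Lemma~3.4]{SSS} to which the paper defers: decompose the balanced weighted digraph $\Gamma(v)$ into one Eulerian circuit per connected component, realize each circuit as a loop of alternating $\sigma$- and $\tau$-edges in $T_A$, and then use $h(v)=0$ together with the fact that $T_A$ is a $K(A,1)$ with abelian fundamental group to cap the collection of loops with a sphere minus $|\Gamma(v)|$ discs. The paper's remark that one ``sets all generalized winding numbers to zero'' is exactly your implicit choice when invoking $h(v)=0$ to make the $\sigma$-edges contribute nothing to homology; you have simply made that step explicit rather than citing it.
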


Lemma \ref{boundary} is a restatement of \cite[Lemma 3.4]{SSS}, and by setting all of the generalized winding numbers to zero the proof follows in the same way.

Let $|v|$ be the sum of the components of $v$ coming from genuine \sedges. There are exactly $2|v|$ corners on a surface, $S$, parameterized by $v$, thus $$\chi_o(S)=\chi(S)-\displaystyle\frac{|v|}{2}=2-\mbox{(\# of boundary components of $S$)}-\displaystyle\frac{|v|}{2}.$$

Noticing that $|\Gamma(nv)|=|\Gamma(v)|$, for any planar component $S_1$ of $S$, parameterized by $v_1$  with negative Euler characteristic and any $\epsilon>0$ there is an integer $n$ so that there is a surface parameterized by $nv_1$ (which we call $nS_1$) with $-\chi(nS_1)/n<\epsilon$. Thus, components  of $S$ with negative Euler characteristic are projectively negligible and contribute only corners to $\chi_o$. To compute scl we need to study those components with positive Euler characteristic, \textit{i.e.}, discs. It is here that our case differs dramatically from the case of free products.

In the case of an amalgamated free product, there exist surfaces with components that are parameterized by vectors not in $V_A$ (it may be that $h(v)\neq0$, but $\partial(v)=0$); however, for any surface obtained by cutting along \sedges, the sum of all of the components is in $V_A$. The following example shows the importance of the winding numbers on \sedges\, and will motivate the expansion of our cone $V_A$.

\textit{Example.} Let $G=\mathbb{Z}^2\ast_\mathbb{Z}\mathbb{Z}^2$ where the first $\mathbb{Z}^2$ is generated by $a$ and $b$, and the last is generated by $c$ and $d$ and we amalgamate along the shared subgroup $b=c$. Let $w=[b, d]$, which is clearly the identity. However, this is a word in a normal form, so we would like to find the disc that a tight representative of $w$ bounds.

Using the notation for the vector spaces above, we have 4 \tedges, $b$, $B$, $d$ and $D$, using the convention $B=b^{-1}$. Consider the components mapping to the torus representing the first $\mathbb{Z}^2$. $T(A)$ has dimension $2$ and $T_2(A)$ is generated by the \sedges\ $(b, b)$, $(b, B)$, $(B, b)$ and $(B, B)$. Label these \sedges\ $e_1$, $e_2$, $e_3$ and $e_4$ and the coordinates of each vector by $(v_1, v_2, v_3, v_4)$. We can easily see that $$\ker(\partial)=\mbox{span}\left\{(1,0,0,0), (0,1,1,0),(0,0,0,1)\right\},$$  $$\ker(h)=\mbox{span}\left\{(1,0,0,1),(0,1,0,0),(0,0,1,0)\right\}.$$ Consider the vector $(1, 0, 0, 1)\in V_A$. \\

\vspace{-0.1cm}

\begin{figure}[h!]
\centering
\subfloat[Annulus]{\label{annulus}
\labellist
\pinlabel $b$ at 83 116
\pinlabel $B$ at 83 91
\pinlabel $0$ at 83 52
\pinlabel $0$ at 83 29
\endlabellist
\includegraphics[scale=0.75]{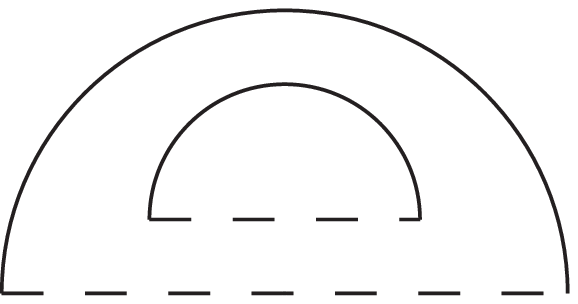}}~ \hspace{2cm}
\subfloat[Discs]{\label{discs}
\labellist
\pinlabel $b$ at 62 200
\pinlabel $-1$ at 62 135
\pinlabel $B$ at 62 91
\pinlabel$+1$ at 62 31
\endlabellist
\includegraphics[scale=0.75]{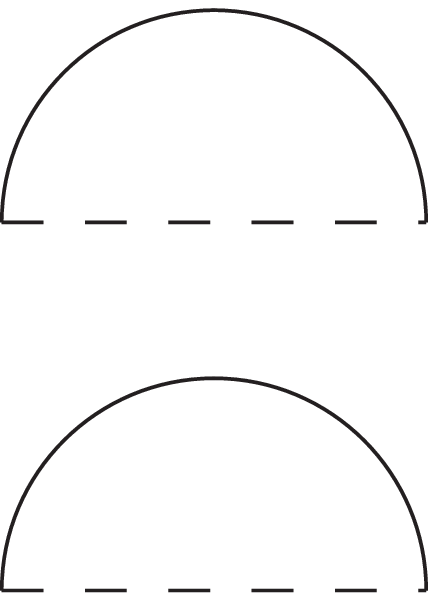}}
\caption{Representations of $(1, 0, 0, 1)$ as Planar Surfaces}
\label{aandd}
\end{figure}

Both an annulus and two discs are parameterized by $v$, as shown in Figure \ref{aandd}. In each diagram, we must attach integers to each $\sigma$-edge, as noted in the lead up to Lemma \ref{matrix}: in Figure \ref{annulus} we label each $\sigma$-edge 0, while in Figure \ref{discs} we label one $\sigma$-edge $1$ and the other $-1$. We see this by realizing $(1, 0, 0, 1) = (1, 0, 0, 0) + (0, 0, 0, 1)$ which, while not in $V_A$, are both represented by discs whose sum is in $V_A$.

\begin{figure} [h!]
\centering
\labellist
\pinlabel $d$ at 140 157
\pinlabel $b$ at 8 84
\pinlabel $-1$ at 68 84
\pinlabel $+1$ at 208 84
\pinlabel $B$ at 268 84
\pinlabel $D$ at 140 13
\endlabellist
\includegraphics[scale=0.80]{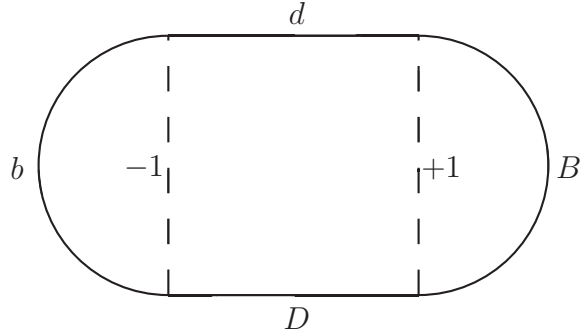}
\caption{The disc bounding the word $bdBD$}
\label{glue}

\end{figure}

We can now create the disc, as promised, by using the vector in $V_B$ given by $(d, D)+ (D, d)$, as shown in Figure~\ref{glue}. Each $\sigma$-edge is drawn in the figure with dashed lines and is labelled with and integer indicating its class in $H_1(C)$, while each $\tau$-edge is labeled by the maximal subword it represents. This disc confirms that $[b, d]=1$.

\subsection{Disc Vectors} We call any integral vector $v$ which is realized by a map of a disc into $T_A$ a \emph{disc vector}. From Lemma \ref{boundary}, we conclude that these are exactly the vectors $v$ for which $|\Gamma(v)|=1$. The example shows, however, that we need to look outside of $V_A$ to find all of the necessary disc vectors to compute scl. To do this effectively, we need to expand our vector space $C_2(A)$.

\begin{note}Let $\overline{C_2(A)}=C_2(A)\times \mbox{span}_\mathbb{R}\{a_1^{r_1}, \ldots a_k^{r_k}\}$. Abusing notation, we consider the element $((v,w),(l_1, \ldots, l_k))$ of $\overline{C_2(A)}$  as \linebreak $(v,w)+\sum l_ia_i^{r_i}$. We extend the maps $\partial$ and $h$ to $\overline{C_2(A)}$ as follows:
 \begin{center}
 $\partial((v, w))=v-w$ and $\partial(a_i^{r_i})=0$\\
 $h((v, w))=\frac{1}{2}\left(v+w\right)$ and $h(a_i^{r_1})=-r_ia_i$.\end{center}
 Further, let $\overline{V_A}$ be $\ker(\partial)\cap\ker(h)$ intersected with the non-negative orthant of $\overline{C_2(A)}$\end{note}

Notice that the projection $\pi:\overline{V_A} \to C_2(A)$ is one-to one, and so it can be identified its image. Since $\overline{V_A}$ is a cone on a finite-sided rational polyhedron in $\overline{C_2(A)}$, the same holds in $C_2(A)$. In fact, there is a map $\pi(\overline{V_A})\to \overline{V_A}$ given by $v\mapsto (v, h(v))$. Further, $V_A\subseteq\overline{V_A}$ in this way. Alternatively, we can identify $V_A$ with $V_A\times\{0\}$. Let $\mathcal{D}_A$ be the set of disc vectors in $\overline{V_A}$. We look for representations of $v\in V_A$ by the most disc vectors \textit{i.e.}, representations of the form $v=\sum t_iv_i + v'$ such that $v_i\in\mathcal{D}_A$, $t_i>0$ and $v'\in\overline{V_A}$. We call these \textit{acceptable representations}. Following \cite{SSS} we make the following definition.

\begin{defn} The \emph{Klein function} on $V_A$ is defined by $\kappa(v)=\max\sum t_i$, where the maximum is taken over all acceptable representations $v=\sum t_iv_i + v'$.\end{defn}

As in Calegari's algorithm, we use $\kappa$ to approximate the Euler characteristic of a vector. However, $\kappa$ will always consider any dummy $\sigma$-edge corresponding to an abelian loop representing an element in the shared $\mathbb{Z}^k$ subgroup as a disc. In light of this, we define $$\chi_o(v)=\kappa(v) - \displaystyle\frac{|v|}{2}-v_{ab},$$ as the orbifold Euler characteristic of a \textit{vector}, where $v_{ab}$ is the $l_1$-norm of the component of $v$ coming from dummy \sedges\ corresponding to abelian loops labelled with an element of the amalgamating subgroup.

\begin{lem} \label{approx} Let $S_A$ be a surface with $v(S_A)=v$, then $\chi_o(v)\ge \chi_o(S_A)$. Further, for any rational vector $v\in V_A$ and $\epsilon>0$ there exists a surface $S_A$ with $v(S_A)=nv$ and $\chi_o(v)\le\chi_o(S_A)/n +\epsilon$.\end{lem}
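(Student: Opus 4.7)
The plan is to handle the two inequalities separately. For the upper bound $\chi_o(S_A) \leq \chi_o(v)$, I would decompose the surface $S_A$ into its connected components, each of which is planar by the reduction argument of the preceding section. Every component is parameterized by a vector in $\overline{V_A}$ once we record the winding numbers on its $\sigma$-edges, and the sum of these vectors is $v$. Split the components into three types: honest discs, whose parameterizing vectors are disc vectors in $\mathcal{D}_A$; discs whose unique boundary is an abelian loop representing an element of the amalgamating subgroup (the dummy discs accounting for the $v_{ab}$ term); and planar components with at least two boundary circles. Combining the first two types of disc vectors with the sum $v'$ of the multi-boundary component vectors produces an acceptable representation $v = \sum v_i + v'$, so $\kappa(v)$ is at least the total number of disc components. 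Only honest discs contribute $+1$ each to $\chi(S_A)$; the dummy discs realize as cylinders through $T^k \times [0,1]$ and contribute $0$; and multi-boundary planar components have $\chi \leq 0$. Thus $\chi(S_A) \leq \kappa(v) - v_{ab}$. Since $S_A$ has exactly $2|v|$ corners, this gives $\chi_o(S_A) \leq \kappa(v) - v_{ab} - |v|/2 = \chi_o(v)$.

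For the second inequality, fix a rational $v \in V_A$ and $\epsilon > 0$, and select a rational acceptable representation $v = \sum t_i v_i + v'$ attaining $\sum t_i = \kappa(v)$. Choose $n$ so that every $nt_i$ is a positive integer and $nv'$ is integral. For each $i$, take $nt_i$ disjoint discs realizing the disc vector $v_i$, and adjoin a planar surface realizing $nv'$; the latter exists by a direct extension of Lemma \ref{boundary} to integral vectors in $\overline{V_A}$, in which prescribed winding numbers on $\sigma$-edges are built into the construction. The honest-disc part contributes $n(\kappa(v) - v_{ab})$ to $\chi$, the dummy caps contribute $0$, and by the projective-negligibility remark following Lemma \ref{boundary} we may, after possibly enlarging $n$, arrange that the remaining $nv'$ piece $S'$ satisfies $-\chi(S')/n < \epsilon$. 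Because $S_A$ has $n|v|$ corners, assembling these pieces yields $\chi_o(S_A)/n \geq \chi_o(v) - \epsilon$, which is the desired estimate.

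The main technical obstacle is the bookkeeping surrounding abelian loops in the amalgamating subgroup and the passage between $V_A$ and $\overline{V_A}$. The Klein function $\kappa$ treats dummy $\sigma$-edges for abelian loops in $\gspan{a_1^{r_1}, \ldots, a_k^{r_k}}$ as discs, even though such loops do not bound in $T_A$; the subtraction of $v_{ab}$ corrects for this, but verifying it requires carefully matching each dummy-edge disc vector with its genuine geometric realization (a cylinder through the amalgamating cylinder) in both directions of the argument. The other subtlety is the extension of Lemma \ref{boundary} to integral vectors in $\overline{V_A}$, which amounts to attaching the prescribed winding numbers from the $l_i$ coordinates to the $\sigma$-edges produced by the original construction; this is largely formal once the correspondence is set up. With those two points in hand, the remainder of the argument parallels the analogous Euler-characteristic estimate in \cite{SSS}.
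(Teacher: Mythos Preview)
Your strategy matches the paper's, but your treatment of the $v_{ab}$ correction rests on a misconception. You posit components of $S_A$ that are ``discs whose unique boundary is an abelian loop'' in the amalgamating subgroup and claim they ``realize as cylinders through $T^k\times[0,1]$ and contribute $0$.'' But $S_A$ maps into $T_A$, not into all of $X$; a component with a single boundary circle is a disc with $\chi=1$, and no such disc can exist when that loop represents a nontrivial element of $A$. Your type-2 class is therefore empty: every such abelian loop lies on a component with at least two boundary circles, so the geometric decomposition of $S_A$ does not directly exhibit the dummy disc vectors as separate summands. The inequality $\kappa(v)\ge d+v_{ab}$ (with $d$ the number of honest disc components) still holds, but for a combinatorial reason: from the component carrying the abelian loop $i$ you may strip off $(i,i)$ together with the matching element of $\mbox{span}\{a_j^{r_j}\}$, and the remainder stays in $\overline{V_A}$ because $h((i,i))$ lies in that span. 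This extraction---not a geometric cylinder---is what the paper's phrase ``$\kappa$ will always count those dummy \sedges\ as discs'' encodes.

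The same issue recurs in your construction for the second inequality. You cannot realize a dummy disc vector $(i,i)$ by an actual disc in $T_A$, nor by a cylinder contributing $0$; the abelian loop must be attached to some genuine component of the surface you are building. The paper handles this explicitly: if $v''\neq 0$, all such loops are added as extra boundary circles to a component parameterized by $v''$; if $v''=0$, one disc is sacrificed to carry them, and a further cover makes that single loss negligible. Without this placement step your assembled surface does not actually satisfy $v(S_A)=nv$. Your final paragraph correctly flags the bookkeeping here as the main obstacle, but the resolution you sketch (cylinders through the amalgamating cylinder) is not the right mechanism at the level of $S_A$.
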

\begin{proof} This first statement follows from the fact that every disc component has Euler characteristic 1. Thus, disc components of $S_A$ can add at most $\kappa(v)-v_{ab}$ to the Euler characteristic, since $\kappa$ will always count those dummy \sedges\ as loops. All other components contribute zero or a negative number to the Euler characteristic along with their corners, so $\chi(S_A)$ is at most $\kappa(v)-v_{ab}-\frac{|v|}{2}$ and the result follows.

For the second, let $v=\sum t_iv_i +v'$ be an acceptable representation of $v$ that realizes $\kappa(v)$. By perturbing the $t_i$ slightly, we can assume each is rational, and their sum is within $\frac{\epsilon}{2}$ of $\kappa(v)$. Choose $n$ so that: $nv=n_iv_i + v''$, where $n_i\in\mathbb{Z}$ and $v''$ is an integral vector. Now, form a surface using $n_i$ discs parameterized by $v_i$, when $v_i$ is not a dummy $\sigma$-edge representing an element of the amalgamating subgroup, and additional components (which are not discs) corresponding to $v''$. If $v''$ is non-zero, then we add all abelian loops labelled by elements of $\gspan{a_1^{r_1}, \ldots a_k^{r_k}}$ to one of the components in the surface parameterized by $v''$. If $v''=0$, then we are forced to add the abelian loops into a disc.

Now take covers so that the Euler characteristic of all non-disc components are negligible. In particular if $S_A'$ is the union of those components, find $m$ so that $\chi(S_A')/m>-\frac{\epsilon}{2}$. If $v''=0$ but $v_{ab}\neq 0$, then there is some disc vector $w$ in our surface, so that the corresponding disc is not realized (since the component will contain the abelian loop). In this case, create $m$ discs corresponding to $w$ in $mv$, one of which contains the abelian loop (and so is not realized as a disc). As a result, we obtain a surface $S_A$ whose Euler characteristic (divided by its covering degree) is as close to $\kappa(v)-v_{ab}$ as we would like. 
\end{proof}

The following lemmas are identical to \cite[Lemma 3.10, Lemma 3.11]{SSS}, and the proof of the first is identical if we replace $V_A$ by $\overline{V_A}$.

\begin{lem}\label{linear} $\kappa(v)$ is a non-negative, concave function on $\overline{V_A}$ that is linear on rays and $\kappa=1$ exactly on the boundary of $\mbox{conv}(\mathcal{D}_A+\overline{V_A})$ inside $\overline{V_A}$.\end{lem}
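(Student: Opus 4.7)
The plan is to prove the three analytic properties ($\kappa \geq 0$, linearity on rays, concavity) directly from the definition of $\kappa$, then identify the set $\{\kappa \geq 1\}$ with the convex hull $\mbox{conv}(\mathcal{D}_A + \overline{V_A})$, from which the boundary characterization follows. The argument should parallel the one for $V_A$ in \cite[Lemma 3.10]{SSS}, since passing to $\overline{V_A}$ changes only the ambient cone.

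First, I would establish the easy properties. The trivial representation with all $t_i = 0$ and $v' = v$ is acceptable for any $v \in \overline{V_A}$, giving $\kappa \geq 0$. For homogeneity, multiplying an acceptable representation $v = \sum t_i v_i + v'$ by $s > 0$ yields the acceptable representation $sv = \sum (st_i)v_i + sv'$ of $sv$ (using that $\overline{V_A}$ is a cone), and taking sup in both directions gives $\kappa(sv) = s\kappa(v)$. For superadditivity, summing acceptable representations of $v_1$ and $v_2$ over a common index set of disc vectors produces an acceptable representation of $v_1+v_2$ whose coefficient sum is the sum of the original ones, so $\kappa(v_1+v_2)\geq \kappa(v_1)+\kappa(v_2)$; combining superadditivity with positive homogeneity yields concavity in the standard way.

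Next I would show $K:=\mbox{conv}(\mathcal{D}_A + \overline{V_A}) = \{v \in \overline{V_A} : \kappa(v) \geq 1\}$ by two inclusions. For $(\supseteq)$, write $v = \sum t_i(v_i + w_i)$ with $\sum t_i = 1$, $v_i \in \mathcal{D}_A$, $w_i \in \overline{V_A}$; then $v = \sum t_i v_i + \sum t_i w_i$ is an acceptable representation because $\overline{V_A}$ is a convex cone, so $\kappa(v) \geq 1$. For $(\subseteq)$, given an acceptable representation $v = \sum t_i v_i + v'$ with $T := \sum t_i \geq 1$, the scaled expression $v/T = \sum (t_i/T) v_i + v'/T$ is a convex combination of elements of $\mathcal{D}_A + \overline{V_A}$, so $v/T \in K$; since $K + \overline{V_A} = K$ (as $\mathcal{D}_A + \overline{V_A} + \overline{V_A} = \mathcal{D}_A + \overline{V_A}$), adding $\tfrac{T-1}{T}v \in \overline{V_A}$ puts $v \in K$. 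The boundary characterization follows: if $\kappa(v) = 1$, homogeneity gives $\kappa((1-\epsilon)v) = 1-\epsilon < 1$, so every neighborhood of $v$ meets the complement of $K$; while if $\kappa(v) > 1$, continuity of $\kappa$ on the relative interior (a consequence of concavity and homogeneity) places $v$ in the interior of $K$.

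The main obstacle I anticipate is ensuring that $\kappa$ is actually attained as a maximum (not merely a supremum), so that the summing and scaling steps above can manipulate genuine representatives. Because $\overline{V_A}$ lies in the non-negative orthant, the coefficients $t_i$ in any acceptable representation are bounded in terms of the coordinates of $v$ (disc vectors being nonzero integral points of $\overline{V_A}$), and a compactness argument on the finite-sided rational polyhedron $\overline{V_A}$ yields existence of optimizers. Once this is in place, the rest of the argument is formal, and the full proof carries over from \cite[Lemma 3.10]{SSS} essentially verbatim.
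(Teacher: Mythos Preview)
Your proposal is correct and matches the paper's approach exactly: the paper simply states that the proof is identical to \cite[Lemma 3.10]{SSS} upon replacing $V_A$ by $\overline{V_A}$, and your sketch spells out precisely that argument (non-negativity from the trivial representation, homogeneity and superadditivity from the cone structure of $\overline{V_A}$, and the identification $\{\kappa\ge 1\}=\mathrm{conv}(\mathcal{D}_A+\overline{V_A})$). One cosmetic slip: your labels ``$(\supseteq)$'' and ``$(\subseteq)$'' for the two inclusions are swapped relative to the direction each argument actually establishes, but the mathematics is fine.
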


\begin{lem}\label{poly}The sets $\mbox{conv}(\mathcal{D}_A)$ and $\mbox{conv}(\mathcal{D}_A+\overline{V_A})$ are finite sided, convex, rational, closed polytopes, whose vertices are elements of $\mathcal{D}_A$.\end{lem}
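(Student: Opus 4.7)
The plan is to combine the rational polyhedral cone structure of $\overline{V_A}$, established just above, with a stratification of $\mathcal{D}_A$ by support type, Dickson's lemma, and standard Minkowski-sum theory for rational polyhedra.

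\textit{Setup.} Since $\Gamma$ has only finitely many vertices and edges, only finitely many connected subgraphs $E \subseteq \Gamma$ arise. Stratify $\mathcal{D}_A = \bigsqcup_E \mathcal{D}_A^E$, where $\mathcal{D}_A^E$ consists of disc vectors with support exactly $E$. Each stratum is contained in $\mathbb{Z}_{\geq 0}^N$, so by Dickson's lemma its componentwise-minimal elements form a finite set $\mathcal{H}_E \subseteq \mathcal{D}_A^E$; setting $\mathcal{H} := \bigcup_E \mathcal{H}_E$ gives a finite subset of $\mathcal{D}_A$. The key containment $\mathcal{D}_A \subseteq \mathcal{H} + (\overline{V_A} \cap \mathbb{Z}^N)$ then follows: given $v \in \mathcal{D}_A^E$, choose $d \in \mathcal{H}_E$ with $d \le v$ componentwise, so $v - d$ is a non-negative integer vector, and since $\overline{V_A}$ is cut out by the linear conditions $\partial = 0$ and $h = 0$ the difference satisfies both, giving $v - d \in \overline{V_A} \cap \mathbb{Z}^N$.

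\textit{Assembly.} Together with $\mathcal{H} \subseteq \mathcal{D}_A$, this containment yields $\mathrm{conv}(\mathcal{D}_A + \overline{V_A}) = \mathrm{conv}(\mathcal{H}) + \overline{V_A}$, the Minkowski sum of a rational polytope with a finite-sided rational cone, which is a closed, convex, rational, finite-sided polyhedron whose extreme points lie in $\mathcal{H} \subseteq \mathcal{D}_A$. For $\mathrm{conv}(\mathcal{D}_A)$ itself, an analogous argument works once one identifies the recession cone: using that $\mathcal{D}_A$ is closed under positive integer scaling ($nv \in \mathcal{D}_A$ for $v \in \mathcal{D}_A$, $n \in \mathbb{Z}_{>0}$), the recession cone is generated by $\mathcal{H}$, and $\mathrm{conv}(\mathcal{D}_A) = \mathrm{conv}(\mathcal{H}) + \mathrm{cone}(\mathcal{H})$, again a finite-sided rational polyhedron with vertices in $\mathcal{H}$.

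\textit{Main obstacle.} The principal subtlety is ensuring that the interplay between the combinatorial connectivity condition defining disc vectors and the linear conditions defining $\overline{V_A}$ is clean. Stratifying by connected support keeps the difference $v - d$ inside $\overline{V_A}$ and ensures that Dickson's lemma produces genuine \emph{disc} vectors rather than mere non-negative integer points of $\overline{V_A}$; without restricting to a single stratum, componentwise-minimal elements could have disconnected support and fail to be disc vectors. Once this is in place, the finite-sidedness claims reduce to the standard fact that a Minkowski sum of a rational polytope with a rational polyhedral cone is a finite-sided rational polyhedron.
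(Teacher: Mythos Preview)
Your argument for $\mathrm{conv}(\mathcal{D}_A+\overline{V_A})$ is correct and matches the paper's: both reduce via Dickson's lemma to the identity $\mathcal{D}_A+\overline{V_A}=\mathcal{H}+\overline{V_A}$ for a finite $\mathcal{H}\subseteq\mathcal{D}_A$, whence the set is a Minkowski sum of a rational polytope with the rational cone $\overline{V_A}$. (Your stratification by support is a clean way to organise this, though strictly speaking one could apply Dickson directly to $\mathcal{D}_A$; minimal elements of $\mathcal{D}_A$ are automatically in $\mathcal{D}_A$, so the worry in your ``main obstacle'' paragraph about disconnected supports does not actually arise there.)

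The gap is in your treatment of $\mathrm{conv}(\mathcal{D}_A)$. You assert $\mathrm{conv}(\mathcal{D}_A)=\mathrm{conv}(\mathcal{H})+\mathrm{cone}(\mathcal{H})$, but the containment $\subseteq$ does not follow from what you have. From $v=d+w$ with $d\in\mathcal{H}$ you only know $w\in\overline{V_A}$, and $\overline{V_A}$ is in general strictly larger than $\mathrm{cone}(\mathcal{D}_A)$: a vector $w\in\overline{V_A}$ whose edge--support is disconnected need not decompose as a non-negative combination of disc vectors, because restricting to a single support component can force some $a_i$--coordinate of $h$ to become negative, violating the non-negativity of the auxiliary $a_i^{r_i}$--coordinates that cut out $\overline{V_A}$. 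So the ``analogous argument'' breaks exactly at the step where the cone you add must be $\mathrm{cone}(\mathcal{H})$ rather than $\overline{V_A}$. (Closure of $\mathcal{D}_A$ under positive integer scaling gives you $\mathrm{cone}(\mathcal{H})\subseteq\mathrm{rec}\bigl(\mathrm{conv}(\mathcal{D}_A)\bigr)$, but not the reverse inclusion, and not the decomposition you need.)

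The paper handles $\mathrm{conv}(\mathcal{D}_A)$ differently: it invokes \cite[Lemma~4.8]{SSS} to identify the open faces of $\ker(\partial)$ containing disc vectors with connected recurrent subgraphs of $\Gamma$, and then on each such face $F$ appeals to the theory of Klein polyhedra (sails) to conclude that $\mathrm{conv}(F\cap\mathcal{D}_A)$---the convex hull of the nonzero lattice points in a rational cone---is finite-sided. Finitely many faces then give finite-sidedness of the whole. This avoids the decomposition problem above by working face-by-face rather than trying to identify a global recession cone from $\mathcal{H}$ alone.
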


\begin{proof} By \cite[Lemma 4.8]{SSS} the faces of $\ker(\partial)$ are in one-to-one correspondence with recurrent subgraphs of $\Gamma$, so that the disc vectors in $\overline{V_A}$ are contained in the open faces corresponding to recurrent connected subgraphs of $\Gamma$. If $F$ is one of them, we know that $\mbox{conv}(F\cap\mathcal{D}_A)$ is a finite sided polytope (see \cite{Sails} for an explanation). Now, there are only finitely many connected recurrent subgraphs of $\Gamma$, so only finitely many of these faces to consider. The the convex hull is also a finite sided polytope. Similarly, \cite{Sails} implies that $\mbox{conv}(\mathcal{D}_A+\overline{V_A})$ is covered by only finitely many translates of $\overline{V_A}$ (a slight generalization of Dickson's Lemma), implying the result.\end{proof}

\begin{cor}\label{kappa} The functions $\kappa(v)$ and $\chi_o(v)$ are each equal to the minimum of the finite set of rational linear functions.\end{cor}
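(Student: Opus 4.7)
The plan is to deduce Corollary \ref{kappa} essentially formally from the two preceding lemmas, invoking the standard fact that a positively $1$-homogeneous concave function whose unit super-level set is a rational polyhedron is the pointwise minimum of finitely many rational linear functions.

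First I would apply Lemma \ref{poly} to write the polyhedron $P := \mbox{conv}(\mathcal{D}_A + \overline{V_A})$ as the solution set of finitely many rational linear inequalities $L_i(v) \geq c_i$, with each $L_i$ a rational linear functional and each $c_i \in \mathbb{Q}$. Then I would use Lemma \ref{linear}, which tells us $\kappa$ is positively $1$-homogeneous on $\overline{V_A}$ and satisfies $\kappa(v) \geq 1$ if and only if $v \in P$. For $v \in \overline{V_A}$ with $c := \kappa(v) > 0$, the vector $v/c$ lies on $\partial P$, so $L_i(v) \geq c \cdot c_i$ for every $i$ with equality for at least one index. Rearranging, the $i$ with $c_i > 0$ yield upper bounds $\kappa(v) \leq L_i(v)/c_i$, and equality holds for one of them. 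The $i$ with $c_i \leq 0$ impose no bound on $c$ beyond what is automatic on the cone $\overline{V_A}$. Hence
\[
\kappa(v) = \min_{i : c_i > 0} \frac{L_i(v)}{c_i},
\]
exhibiting $\kappa$ as the minimum of finitely many rational linear functions on $\overline{V_A}$ (and in particular on $V_A$).

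The statement for $\chi_o$ follows at once: by definition $\chi_o(v) = \kappa(v) - \tfrac{|v|}{2} - v_{ab}$, and both $|v|$ (the $l^1$-norm on the non-negative orthant) and $v_{ab}$ (a sum of certain coordinates) are rational linear functionals on $\overline{V_A}$. Subtracting these rational linear terms from each $L_i(v)/c_i$ gives another rational linear function, so $\chi_o$ is likewise a minimum over a finite set of rational linear functions. The only point requiring care, and the main (mild) obstacle, is the bookkeeping around the defining inequalities of $P$: one must separate those with $c_i > 0$, which actually bound $\kappa$ from above in the radial direction, from those with $c_i = 0$, which merely cut out the ambient cone $\overline{V_A}$ and must be excluded from the minimum in order for the equality above to make sense.
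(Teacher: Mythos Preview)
Your proof is correct and follows essentially the same route as the paper: both arguments use the facet description of the rational polytope $P=\mbox{conv}(\mathcal{D}_A+\overline{V_A})$ from Lemma~\ref{poly} together with the radial characterization of $\kappa$ from Lemma~\ref{linear} to exhibit $\kappa$ as the minimum of the linear functionals $L_i/c_i$ coming from the facets. If anything, your version is slightly more careful than the paper's in explicitly separating the facets with $c_i>0$ (which bound $\kappa$) from those with $c_i=0$ (which merely cut out $\overline{V_A}$), and in spelling out why the passage from $\kappa$ to $\chi_o$ preserves the ``minimum of rational linear functions'' form.
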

\begin{proof} Note that any function that is linear on rays and exactly 1 on the boundary of $\mbox{conv}(\mathcal{D}_A+\overline{V_A})$ must be equal to $\kappa(v)$. As noted in \cite{CLO}, there is a primitive inward pointing normal vector $v_j$ for each codimension 1 face $F_j$ of $\mbox{conv}(\mathcal{D}_A+\overline{V_A})$, so that $\gspan{v, v_j}=a_j$ for all $v\in F_j$, and for all other vectors in the polytope, $\gspan{v, v_j}>a_j$. Set $\kappa'(v)=\min_j\left\{\gspan{v, \displaystyle\frac{v_j}{a_j}}\right\}$. This function clearly meets the criteria of Lemma \ref{linear}, and so it must be equal to $\kappa(v)$. Further, each of the functions in the minimum is linear and there are only finitely many, since by Lemma \ref{poly} $\mbox{conv}(\mathcal{D}_A+\overline{V_A})$ has only finitely many sides. \end{proof}

\section{Computing SCL}
Let $A$ and $B$ be two free abelian groups of rank at least $k$ and let $G=A\ast_{\mathbb{Z}^k}B$ and consider a chain $\eta\in B_1^H(G)$. From the methods of the previous section there are two polytopes $V_A$ and $V_B$, each of which has  an associated piecewise rational linear function, denoted by $\chi_o^A$ and $\chi_o^B$, respectively. Consider $V_A\times V_B$. We say two \sedges\ $v=(\tau_1, \tau _2)$ and $w=(\tau_1', \tau_2 ')$ are \textit{compatible} if $\tau_1$ is followed by $\tau_2'$ and $\tau_1'$ is followed by $\tau_2$ in $\eta$, considering each word in $\eta$ cyclically. Two vectors $v\in V_A$ and $w\in V_B$ are \textit{compatible} if their components from compatible \sedges\ are equal.

To complete the proof of Theorem \ref{AFP} we first need to prove that given compatible vectors it is possible to construct a surface admissible for $\eta$. We will prove in Lemma \ref{canglue} that we can accomplish this by adding \sloops. We then simplify the surface, as in Proposition \ref{noloop}, and show in Lemma \ref{doubling} and Lemma \ref{glueapprox} that we can control the added \sloops\ in a strong sense.

\begin{lem}\label{canglue}Given two compatible vectors $v\in V_A$ and $w\in V_B$ there exists a surface parameterized by $v$ and $w$, formed by adding \sloops, which is admissible for $\eta$.\end{lem}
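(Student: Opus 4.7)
The strategy is to first build planar pieces on each side from Lemma \ref{boundary}, glue them combinatorially along matching genuine $\sigma$-edges, and then assign the $\mathbb{Z}^k$-valued winding numbers needed to realize these pieces as honest maps into $T_A$ and $T_B$, adding $\sigma$-loops only where required to make the resulting linear system solvable.

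First I would apply Lemma \ref{boundary} to $v$ to produce a planar surface $S_A$ mapping to $T_A$ with prescribed $\sigma$-edge data $v$, and similarly obtain $S_B$ from $w$. Since $v$ and $w$ are compatible, the genuine $\sigma$-edges of $S_A$ pair up bijectively with those of $S_B$ so that, after identifying each matched pair along the cylinder $C$, the boundary curves trace out a cyclic representative of $\eta$ with some positive multiplicity $n(S')$. This produces a topological surface $S'$ with a continuous map $f':S'\to X$ transverse to $C$; at this stage each planar component is still only combinatorial and must be lifted to an honest map into $T_A$ or $T_B$.

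Each planar component $\Sigma$ of $S_A$ has a $\tau$-edge homology deficit $d(\Sigma)\in\mathbb{Z}^k\subseteq A$, namely the sum of the homology classes of its $\tau$-edges (which lies in $\mathbb{Z}^k$ because $\eta$ is in normal form); for $\Sigma$ to lift to an honest map into $T_A$ the winding numbers assigned to its genuine $\sigma$-edges and $\sigma$-loops must sum to $-d(\Sigma)$, and the analogous constraint holds for components of $S_B$ with opposite sign. The resulting linear system on the winding-number variables has precisely the form considered in Lemma \ref{matrix}: each genuine $\sigma$-edge variable appears in exactly one equation from the $A$ block and one from the $B$ block, with opposite signs. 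Consistency follows from $h(v)=0$ and $h(w)=0$, which force $\sum_\Sigma d(\Sigma)=0$ on each side. Applying Lemma \ref{matrix} coordinate by coordinate in $\mathbb{Z}^k$ then produces an integer solution, provided every component contains at least one free variable.

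The remaining difficulty is with components whose only $\sigma$-edges are dummies attached to abelian loops: these contribute no free variable, so if such a component has nonzero deficit the system is infeasible as stated. For each obstructed component I would insert a $\sigma$-loop inside it paired with a $\sigma$-loop in some component on the opposite side; this introduces a new variable into one $A$-equation and one $B$-equation simultaneously, preserving the form of Lemma \ref{matrix}. Finitely many such additions suffice to make every equation contain a free variable, and the resulting integer solution for the winding numbers, realized geometrically, completes $S'$ to the desired admissible surface. The hardest part of carrying out this plan is the bookkeeping: the added $\sigma$-loops must be placed so that the hypotheses of Lemma \ref{matrix} continue to hold, and one must later verify (in Lemmas \ref{doubling} and \ref{glueapprox}) that these additions do not disrupt the Euler-characteristic estimates.
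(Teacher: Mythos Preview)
Your approach diverges from the paper's, and there is a genuine gap in your consistency claim.

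The paper does not invoke Lemma \ref{matrix} here at all. Its construction is cruder but self-contained: for each planar component on the $T_A$ side whose $\tau$-edge sum is $\sum m_i a_i^{r_i}$, it inserts $\sum|m_i|$ \sloops\ labelled $\pm a_i^{r_i}$ (and similarly on the $T_B$ side), pairs these loops across the two sides, and then sets every genuine $\sigma$-edge winding number to zero. No linear system is solved; the deficits are absorbed entirely by the added loops. The later simplification (Proposition \ref{noloop}, Lemma \ref{doubling}) is what controls the number of loops, so there is no need to be parsimonious at this stage.

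Your argument, by contrast, tries to assign nontrivial winding numbers to the genuine $\sigma$-edges via Lemma \ref{matrix}, adding \sloops\ only when forced. The problem is your assertion that ``consistency follows from $h(v)=0$ and $h(w)=0$.'' Those conditions only say that the sum of \emph{all} $A$-side deficits, and of all $B$-side deficits, vanish. Consistency of the system in Lemma \ref{matrix} requires more: every linear dependency among the rows must be respected on the right-hand side. The dependencies correspond exactly to the connected components of the glued surface $S'$ (equivalently, of the bipartite graph whose vertices are the $A$- and $B$-pieces and whose edges are the shared genuine $\sigma$-edges). For each such component the sum of $A$-piece deficits must equal the sum of $B$-piece deficits, i.e.\ that component's boundary subchain of $\eta$ must already lie in the span of the amalgamating words. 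Nothing in $h(v)=h(w)=0$ forces this for a \emph{proper} subcollection, so the system can be inconsistent even when every piece carries a genuine $\sigma$-edge. Your proposed fix---adding \sloops\ only to components whose sole $\sigma$-edges are dummies---misses this case entirely.

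A correct repair along your lines would be to add enough \sloops\ to connect the bipartite graph (so that the only row dependency is the global one, which $h(v)=h(w)=0$ does handle), or simply to adopt the paper's brute-force strategy.
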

\begin{proof}
Since each $\sigma$-edge carries a weight, and weights attached to glued \sedges\ must be opposite, we need to guarantee a solution to the gluing equations from section 2. We will do that by adding \sloops\ to components coming from $T_A$ and $T_B$ and gluing those accordingly.

For each component coming from $T_A$, if the sum of its \tedges\ is $\sum m_ia_i^{r_i}$, add $\sum |m_i|$ loops to the component each labelled with $\pm a_i^{r_i}$ and similarly for $T_B$. We can glue loops labelled with $\pm a_i^{r_i}$ to loops labelled $\mp b_i^{s_i}$. 

If $T_A$ has more loops labelled with $\pm a_i^{r_i}$ than $T_B$ then glue as many loops as possible. Since $w$ is in normal form, the remaining loops contain the same number of $a_i^{r_i}$ labels as $-a_i^{r_i}$ labels. Choose a component of $S$ coming from $T_B$ and add as many loops as there are remaining in $T_A$. Label half with $b_i^{s_i}$ and half with $-b_i^{s_i}$ and complete the gluing. See Figure \ref{LoopDiagram} for an example.

\begin{figure}
\labellist
\pinlabel $a^3$ at 70 335
\pinlabel $-1$ at 65 263
\pinlabel $a$ at 70 192
\pinlabel $A^2$ at 70 132
\pinlabel $+1$ at 65 59
\pinlabel $A^2$ at 70 14
\pinlabel $B^2$ at 349 329
\pinlabel $b^2$ at 349 296
\pinlabel $-1$ at 283 246
\pinlabel $+1$ at 405 246
\pinlabel $b$ at 344 127
\pinlabel $B$ at 344 90
\endlabellist
\includegraphics[scale=0.50]{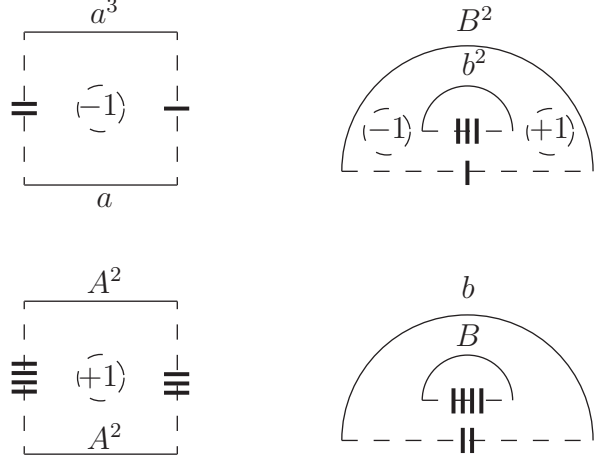}
\caption{A surface bounding the chain $a^3B^2ab+A^2BA^2b^2$ in $\gspan{a, b\mid a^4=b^3}$. All \sedges\ are labelled zero and we glue each $+1$ loop to a $-1$ loop.}
\label{LoopDiagram}
\end{figure}
By labeling all of the remaining \sedges\ 0, we obtain the required gluing.\end{proof}

After adding \sloops\ in the proof of the proposition we have altered the Euler characteristic of the resulting surface. In what follows we will simplify this surface using Proposition \ref{noloop}, by altering the winding numbers on \sedges\ and \sloops, to bring the Euler characteristic of the resulting surface in line with the estimate obtained in Proposition \ref{approx}.

First, if $\eta$ is either a word or does not contain a subchain in the span of $\{a_1^{r_1},\ldots a_k^{r_k}\}$, Proposition \ref{noloop} guarantees an integral solution to the gluing equations with all loops eliminated, since there are no subchains with sum in the span of $\{a_1^{r_1}, \ldots, a_k^{r_k}\}$. Further, Lemma \ref{approx} guarantees that $\chi_o^A+\chi_o^B$ is an estimate for the Euler characteristic of this surface.

If $\eta$ contains a subchain in the span of the amalgamating words, then Proposition \ref{noloop} implies that the gluing equations have a solution, leaving a limited number of separating loops. The following proposition shows us that we can strongly control the propagation of loops when taking covers in the sense of Lemma \ref{approx}.

\begin{prop}\label{doubling} Let $v\in V_A$ and $w\in V_B$ be compatible integral vectors. Let $S$ be a surface parameterized by the pair with the least number of loop components. Suppose that $S$ contains $k$ loop components. Then there is a surface $S'$ parameterized by $2v$ and $2w$ which contains at most $k$ loop components.\end{prop}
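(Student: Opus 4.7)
I would prove this by a doubling-and-cross-gluing argument. Begin by forming $S_0 := S \sqcup S$, the disjoint union of two copies of $S$, which is automatically parameterized by $(2v, 2w)$ but contains $2k$ $\sigma$-loop components organized into $k$ pairs $(L_i^{(1)}, L_i^{(2)})$ whose two members share the same $\mathbb{Z}^k$ winding number. The goal is to re-glue $S_0$ so that half of these loops become non-separating, at which point Proposition \ref{noloop} eliminates them.

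The crucial observation is that every $\sigma$-edge occurring in $v$ (respectively $w$) has multiplicity two in $2v$ (respectively $2w$), so $S_0$ contains two instances of each $\sigma$-edge gluing. The parameterization vector specifies only which $\sigma$-edges occur and with what multiplicity, not which instance in copy $1$ is matched with which instance in copy $2$. We therefore have the freedom to \emph{cross-pair}: swap some gluings so that instances in copy $1$ attach to instances in copy $2$ and vice versa. Since the two instances of a given $\sigma$-edge carry identical combinatorial data, the resulting surface is still parameterized by $(2v, 2w)$.

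Next, I would perform one cross-pairing for each of the $k$ loop pairs. For the pair $(L_i^{(1)}, L_i^{(2)})$, choose a piece $P_i$ of $S$ adjacent to $L_i$ that is connected via a $\sigma$-edge to another piece $Q_i$, and swap the gluings so that $P_i^{(1)}$ is now glued to $Q_i^{(2)}$ and $P_i^{(2)}$ to $Q_i^{(1)}$. The swapped $\sigma$-edge together with $L_i^{(1)}$, $L_i^{(2)}$, and the surrounding pieces forms a cycle in the piece-graph (i.e.\ the graph with vertices the components of $S \setminus f^{-1}(C)$ and edges the $\sigma$-edges and $\sigma$-loops), along which one of the two loops, say $L_i^{(1)}$, becomes non-separating. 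Proposition \ref{noloop} then allows $L_i^{(1)}$ to be removed by replacing the surface with one of higher Euler characteristic while preserving the parameterization vector. Applying this procedure for each of the $k$ pairs yields a surface $S'$ parameterized by $(2v, 2w)$ with at most $k$ remaining $\sigma$-loops.

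The main difficulty is to verify that the successive swaps and eliminations produce $k$ independent cycles and that no loop that was made non-separating is re-separated by a later step. I would handle this by an Euler-characteristic count on the piece-graph: it begins as two disjoint copies of $S$'s piece-graph (each with a tree-of-loops structure by Proposition \ref{noloop}), and each non-trivial cross-pair either reduces the number of connected components by one or adds an independent cycle, so $k$ cross-pairings produce at least $k$ independent cycles and hence $k$ removable loops. Degenerate configurations in which no suitable $\sigma$-edge is available adjacent to some $L_i$ can be handled by cross-pairing through a $\sigma$-loop in the same pair instead; the hypothesis that $S$ realizes the minimum loop count rules out the most pathological such configurations.
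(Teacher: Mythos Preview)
Your central claim---that a single cross-swap of a $\sigma$-edge adjacent to $L_i$ creates a cycle in the piece-graph containing both $L_i^{(1)}$ and $L_i^{(2)}$---is false. The piece $P_i$ you choose is adjacent to $L_i$, and any piece $Q_i$ joined to $P_i$ by a $\sigma$-edge lies on the \emph{same} side of $L_i$ as $P_i$ (since $L_i$ is separating in $S$). Thus your swap, at best, connects the two copies of the region on \emph{one} side of $L_i$; it does nothing to connect the two copies of the region on the other side. A path from one side of $L_i^{(1)}$ to the other must still pass through $L_i^{(1)}$ itself (or through another $\sigma$-loop), so $L_i^{(1)}$ remains separating and Proposition~\ref{noloop} does not let you delete it. Concretely, take $k=1$: your single swap produces (for instance) a graph of the form $R_2^{(1)}\text{--}L^{(1)}\text{--}R_1^{*}\text{--}L^{(2)}\text{--}R_2^{(2)}$, a tree with two separating loops, not one.

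Your fallback Euler-characteristic count does not rescue this: starting from two disjoint components, $k$ cross-pairings yield first Betti number at most $k-1$, since at least one pairing is spent merging the two copies before any cycle can appear; and when the swapped edge is a bridge in its region, a swap may neither merge components nor create a cycle at all. The paper's argument avoids this by organizing the work \emph{per region}, not per loop: there are $k+1$ boundary subcollections $\partial_1,\dots,\partial_{k+1}$, and for each one separately it arranges (via a connected non-disc component in the double, or via a cross-swap when all pieces are discs) that the two lifts of $\partial_i$ lie in the same subcollection of $\partial S'$. That yields $k+1$ subcollections in $S'$, and Proposition~\ref{noloop} then forces at most $k$ loops. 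The correct count is $k+1$ merges, one for each region, rather than $k$ merges, one for each loop.
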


\begin{proof} Take $v$, $w$ and $S$ as above. According to Proposition \ref{noloop}, since there are $k$ loop components $\partial S$ is divided into $(k+1)$ subcollections $\partial_1, \ldots, \partial_{k+1}$ so that each subcollection is mapped to a chain over $G$ which is in the span of the amalgamating words. Further, no component of $S\setminus f^{-1}(C)$ can contain \tedges\ from two of the subcollections and no boundary component of $\partial_i$ can be joined to one in $\partial_j$ by an arc in $S$ which does not cross a loop component of $f^{-1}(C)$.

Form $S'$ as in Lemma \ref{approx}, so that $\partial S'$ is a double cover of $\partial S$. The surface $S'$ may have more boundary components than $S$, meaning that some boundary components may be doubled in $S'$. There are two possible cases.

If some component with a $\tau$-edge from $\partial_i$ does not correspond to a disc vector, then in $S'$ each lift of a boundary component in $\partial_i$ can be connected to the other lift of that boundary component by an arc which remains in one component. Thus, in $S'$ the lift of $\partial_i$ cannot be separated into two subcollections.

If every component with \tedges\ in $\partial_i$ is in $\mathcal{D}_A\cup\mathcal{D}_B$, consider the most na\"ive gluing of the \sedges, where we duplicate the gluing pattern from $S$. The two lifts of each boundary component of $\partial_i$ give two boundary components of $S'$. There are necessarily two subcollections, which we will call $\partial_i$ and $\partial_i'$. 

From the naive gluing since each $\sigma$-edge appears twice, we can switch the pairing. In this way, $\partial_i$ is joined to $\partial_i'$ by an arc which does not cross a loop. Thus $\partial_i$ and $\partial_i'$ are not disconnected in this surface and can therefore not be separated into two subcollections.

Thus, in $S'$ we can arrange for there to be $k+1$ subcollections of boundary components which can be pairwise separated by loops. Thus only at most $k$ loops to solve the gluing equations, as desired.
\end{proof}

\begin{prop}\label{glueapprox} Let $v$ and $w$ be compatible rational vectors, then for any $\epsilon>0$ there exists a positive integer $N$ and a surface $S$ parameterized by $Nv$ and $Nw$ so that $$\chi_o^A(v)+\chi_o^B(w)-\displaystyle\frac{\chi(S)}{N}<\epsilon.$$\end{prop}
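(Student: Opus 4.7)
The plan is to produce a large covering surface whose pieces nearly achieve the orbifold optima on each side (via Lemma \ref{approx}), glue them using Lemma \ref{canglue}, and then dilute the Euler characteristic cost of the added $\sigma$-loops by iterating Proposition \ref{doubling}.

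First, apply Lemma \ref{approx} separately on each side: pick an integer $n$ and planar surfaces $S_A\to T_A$, $S_B\to T_B$ with $v(S_A)=nv$, $v(S_B)=nw$, and
\begin{equation*}
\chi_o(S_A)/n \ge \chi_o^A(v) - \epsilon/4, \qquad \chi_o(S_B)/n \ge \chi_o^B(w) - \epsilon/4.
\end{equation*}
Because $v$ and $w$ are compatible, the $\sigma$-edges of $S_A$ and $S_B$ match in pairs, so Lemma \ref{canglue} produces a surface admissible for $n\eta$ (parameterized by $(nv,nw)$) after inserting finitely many \sloops\ labelled by $\pm a_i^{r_i}$ and $\pm b_i^{s_i}$. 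Proposition \ref{noloop} then lets us replace this surface, without decreasing Euler characteristic, by one in which every loop component of $f^{-1}(C)$ is separating. Among all surfaces parameterized by $(nv,nw)$ produced in this way, choose one $S_0$ minimizing the number $k_0$ of loop components.

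The central Euler characteristic bookkeeping is the identity
\begin{equation*}
\chi(S) \;=\; \chi_o(S_A) + \chi_o(S_B),
\end{equation*}
valid for any glued surface $S$ cut along $f^{-1}(C)$ into $T_A$-pieces $S_A$ and $T_B$-pieces $S_B$ with $E$ many $\sigma$-arcs: each arc contributes $-1$ to $\chi(S)=\chi(S_A)+\chi(S_B)-E$ and two corners to each side, so the orbifold corrections $-E/2$ on each side cancel the $-E$ in $\chi(S)$. Inserting a $\sigma$-loop during gluing excises one disc from each side (loops contribute no corners), decreasing $\chi_o(S_A)+\chi_o(S_B)$ by exactly two. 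Combining this with the two approximations above gives
\begin{equation*}
\chi(S_0)/n \;\ge\; \chi_o^A(v) + \chi_o^B(w) - \epsilon/2 - 2k_0/n.
\end{equation*}

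Finally, iterate Proposition \ref{doubling}: for each $j\ge 0$ there is a surface $S_j$ parameterized by $(2^j n v, 2^j n w)$ with at most $k_0$ loop components. Since the construction in Proposition \ref{doubling} acts as a double cover on the non-loop portion, the per-copy orbifold Euler characteristic of each side is preserved, and any further loop reduction only increases $\chi$; the displayed estimate therefore persists with $(S_0,n)$ replaced by $(S_j, 2^j n)$. Taking $j$ large enough that $2k_0/(2^j n)<\epsilon/2$ and setting $N=2^j n$ yields the desired inequality. The main obstacle is this last step: one must verify that across iterates of Proposition \ref{doubling} the per-copy value $(\chi_o(S_A^{(j)})+\chi_o(S_B^{(j)}))/(2^j n)$ stays within $\epsilon/2$ of $\chi_o^A(v)+\chi_o^B(w)$, so that the \emph{only} Euler characteristic deficit that persists under iterated doubling is the uniformly bounded contribution $2k_0$ from the loop count, whose per-copy effect tends to zero as $j\to\infty$.
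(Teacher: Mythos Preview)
Your argument is correct and is exactly the approach the paper intends: the paper's own proof of this proposition is the single sentence ``This follows from Lemma \ref{approx} and Proposition \ref{doubling},'' and you have faithfully unpacked that sentence. Your Euler-characteristic bookkeeping $\chi(S)=\chi_o(S_A)+\chi_o(S_B)-2(\text{\#loops})$ is the right identity, and the dilution of the bounded loop cost $2k_0$ by powers of $2$ is precisely why Proposition \ref{doubling} is stated the way it is.

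One remark on presentation: your final sentence reads as though you are flagging an unresolved obstacle, but in fact you already dispatched it two sentences earlier. The construction in the proof of Proposition \ref{doubling} builds the pieces of $S'$ as degree-two covers (disjoint pairs of discs, connected double covers of non-disc pieces) of the pieces of $S$; Euler characteristic and corner count are both multiplicative under covers, so $\chi_o(S_A^{(j)})=2^j\chi_o(S_A)$ exactly, and the per-copy value is literally constant in $j$, not merely within $\epsilon/2$. Stating this plainly removes the appearance of a gap. A second minor point: Proposition \ref{doubling} is phrased for the minimum-loop surface, whereas your $S_0$ is built from the specific $S_A,S_B$ supplied by Lemma \ref{approx}; but the proof of Proposition \ref{doubling} only uses the conclusion of Proposition \ref{noloop} (loops separating, boundary split into subcollections), which you have already arranged for $S_0$, so the iteration applies to your $S_0$ directly. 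The paper's one-line proof elides both of these points as well.
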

\begin{proof} This follows from Lemma \ref{approx} and Proposition \ref{doubling}
\end{proof}

Proposition \ref{glueapprox} tells us  the Euler characteristic of a surface an be approximated by the piecewise rational linear function $\chi_o^A+\chi_o^B$, even accounting for adding the loops needed to solve the gluing equations. Further, the condition that $S$ has boundary representing $\eta$ is linear,  so  $\chi_o^A+\chi_o^B$ can be maximized on the intersection of this linear subspace with $V_A$, a finite-sided polyhedron, using linear programming. Thus, we obtain the following theorem.

\begin{thm}\label{AFP} Let $G=A\ast_{\mathbb{Z}^k} B$, where $A$ and $B$ are free abelian groups or rank at least $k$. $G$ is PQL and there exists an algorithm to compute stable commutator length for any rational chain in $B_1^H(G)$.\end{thm}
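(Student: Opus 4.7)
The plan is to assemble the machinery of Section 2 and the first part of Section 3 into a single linear programming problem whose optimum value equals $-2\,\scl_G(\eta)$, and to verify that this LP is defined on a finite-sided rational polytope with a piecewise rational linear concave objective. This will simultaneously yield the PQL property and the algorithm. The six steps in the outline preceding Proposition \ref{noloop} are already the skeleton; what remains is to glue them together cleanly.

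Fix a rational chain $\eta\in B_1^H(G)$ in normal form. I would work inside the finite-dimensional enriched spaces $\overline{C_2(A)}\oplus\overline{C_2(B)}$ generated by the $\tau$-edges in the support of $\eta$, and consider the polytope $L_\eta$ cut out by the conditions $v\in\overline{V_A}$, $w\in\overline{V_B}$, the \emph{compatibility} condition identifying the coordinates of $v$ and $w$ on paired \sedges, and a normalization that the induced boundary chain equals $\eta$. Each of these is a rational linear constraint, so $L_\eta$ is a finite-sided rational polytope inside a finite-dimensional rational vector space. On $L_\eta$ I define the objective $F(v,w):=\chi_o^A(v)+\chi_o^B(w)$; by Corollary \ref{kappa}, $F$ is the minimum of finitely many rational linear functions, hence piecewise rational linear and concave.

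The two directions of the main estimate are as follows. For the lower bound, take any admissible $f\colon S\to X$ of degree $n$ for $\eta$. The reductions in Section 2 (making non-planar pieces planar, deleting parallel loop families, and invoking Proposition \ref{noloop} to make every loop component of $f^{-1}(C)$ separating without decreasing $\chi$) reduce $S$ to a union of planar pieces whose parameterizing vectors sum to a rational pair $(v,w)/n\in L_\eta$; the first half of Lemma \ref{approx} applied on each side then gives $-\chi(S)/n\ge -F(v,w)$. For the upper bound, given rational $(v,w)\in L_\eta$ and $\epsilon>0$, Lemma \ref{approx} produces planar pieces over $T_A$ and $T_B$ whose orbifold Euler characteristics (per cover degree) approximate $\chi_o^A(v)$ and $\chi_o^B(w)$; Lemma \ref{canglue} assembles these pieces into a surface admissible for some multiple of $\eta$, while Proposition \ref{doubling} and Proposition \ref{glueapprox} together guarantee that the extra \sloops\ introduced to solve the gluing equations do not proliferate under further covers, so the per-degree Euler characteristic approaches $-F(v,w)$. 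Combining both directions with \eqref{surface},
$$2\,\scl_G(\eta)\;=\;-\max_{(v,w)\in L_\eta}F(v,w).$$
Since $F$ is concave and piecewise rational linear on the rational polytope $L_\eta$, this maximum is attained at a rational vertex and is computable by solving the finite collection of rational LPs obtained by restricting to maximal domains of linearity of $F$, yielding the algorithm. As $\eta$ varies over a fixed finite-dimensional rational subspace $V\subset B_1^H(G)$, the $L_\eta$ form a common rational polyhedral family and the optimum depends piecewise rationally linearly on $\eta$, giving PQL.

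The step I expect to be the main obstacle, and the reason for the substantial work in Section 2 enriching $V_A$ to $\overline{V_A}$, is matching the combinatorial gluing of Lemma \ref{canglue} with the Euler characteristic estimate of Lemma \ref{approx}: one must control the $\sigma$-loops forced on us by Proposition \ref{noloop} so that they remain projectively negligible under covers. Proposition \ref{doubling} is the essential technical input here; without its control on loop proliferation, the two inequalities above would meet only as an inequality, and $\scl$ could fail to be computed by the LP. A secondary subtlety is verifying that the compatibility conditions and the normalization $\partial(v)+\partial(w)=\eta$ together cut out a truly finite-dimensional rational polytope once $\eta$ is fixed in normal form, which is where the normal form assumption from Section 2 is used crucially.
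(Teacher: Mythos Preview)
Your proposal is correct and follows essentially the same route as the paper: assemble the results of Section~2 and the lemmas preceding Theorem~\ref{AFP} into a linear program maximizing $\chi_o^A+\chi_o^B$ over a rational polytope of compatible vector pairs, and invoke Lemma~\ref{approx} and Proposition~\ref{glueapprox} for the two inequalities. The paper itself states the theorem as an immediate consequence of Proposition~\ref{glueapprox} and the linearity of the boundary constraint, so your write-up is in fact more explicit than the paper's. One small correction: the feasible region should be cut from $V_A\times V_B$ rather than $\overline{V_A}\times\overline{V_B}$, since the total parameterizing vector of the pieces of a cut surface has $h(v)=0$ once $\eta$ is in normal form; the enriched cone $\overline{V_A}$ enters only through the disc vectors defining $\kappa$, not as the domain of the LP.
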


As mentioned before, the methods above naturally extend to amalgamations of several free abelian groups as above over a single $\mathbb{Z}^k$. Let $X$ be the graph of spaces formed from, tori $T_{A_i}$, with $T_{A_i}$ connected to $T_{A_{i+1}}$ with a cylinder. Using this space, we obtain the following, more general theorem.

\begin{thm}\label{MAFP} Let $G=\ast_{\mathbb{Z}^k}A_i$, where $\{A_i\}$ is a collection of free abelian groups of rank at least $k$, and all of the $A_i$ share a common $\mathbb{Z}^k$ subgroup. Then $G$ is PQL and there exists an algorithm to compute stable commutator length for any rational chain in $B_1^H(G)$.\end{thm}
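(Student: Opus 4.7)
The plan is to replicate each step of the proof of Theorem \ref{AFP}, replacing the pair of polytopes $(V_A, V_B)$ with the collection $\{V_{A_i}\}_{i=1}^n$, where $n$ is the number of free abelian factors. With $X$ the graph of spaces formed by chaining the tori $T_{A_i}$ via cylinders $T^k\times[0,1]$ as described above, I first homotope an admissible map $f\colon S\to X$ so that $f(\partial S)$ is a union of tight loops and $f$ is transverse to every midpoint curve $C_{i,i+1}$. Cutting $S$ along the preimage of $\bigcup_i C_{i,i+1}$ decomposes $S$ into planar pieces, each mapping into some $T_{A_i}$: the incompressibility/vanKampen argument of Section 2 is local to each torus and applies unchanged. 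Each planar piece is parameterized by a vector in the appropriate $V_{A_i}$, and the associated piecewise rational linear function $\chi_o^{A_i}$ from Corollary \ref{kappa} is defined as in Section 2.

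The first technical step is a multi-block version of Lemma \ref{matrix}: the gluing system takes the form
\[
\begin{bmatrix} M_{A_1} \\ M_{A_2} \\ \vdots \\ M_{A_n} \end{bmatrix} v = N,
\]
where each genuine $\sigma$-edge on $C_{i,i+1}$ contributes a column with a single $1$ in block $i$ and a single $1$ in block $i+1$ (and zeros elsewhere), and dummy-edge columns have a single $1$ in a single block. I would apply Lemma \ref{matrix} iteratively to consecutive pairs $(M_{A_i}, M_{A_{i+1}})$: since each $\sigma$-edge column is supported in exactly two consecutive blocks, pairwise reductions across disjoint pairs of blocks act on disjoint sets of columns and do not interfere, and all intermediate entries remain in $\{0,1,-1\}$. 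Integrality of the resulting solution follows, and the rest of the proof of Proposition \ref{noloop} carries over: loop components of $f^{-1}(\bigcup_i C_{i,i+1})$ may be assumed separating, with each separated subchain of $f_*(\partial S)$ lying in $\mbox{span}\{a_1^{r_1},\ldots,a_k^{r_k}\}$.

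Compatibility of $\sigma$-edges is local to each cylinder, so its definition from Section 3 extends verbatim, and the constructions of Lemma \ref{canglue}, Proposition \ref{doubling}, and Proposition \ref{glueapprox} each apply cylinder-by-cylinder. In particular, the loop-doubling control of Proposition \ref{doubling} treats each cylinder independently, since loop components at distinct cylinders are disjoint in $S$; combined with Lemma \ref{approx} applied to each $V_{A_i}$ one obtains: for any compatible rational $(v_i)\in\prod_i V_{A_i}$ and any $\epsilon>0$, a surface $S$ and integer $N$ with
\[
\sum_{i=1}^n \chi_o^{A_i}(v_i) - \frac{\chi(S)}{N} < \epsilon.
\]
The condition that $\partial S$ represents $\eta$ is linear in $\prod_i V_{A_i}$, and $\sum_i \chi_o^{A_i}$ is piecewise rational linear on the finite-sided rational polyhedron cut out by this condition together with compatibility. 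Linear programming then minimizes it, yielding the PQL property and an algorithm for $\scl$ on rational chains.

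The main obstacle I anticipate is verifying that the iterated pairwise reduction in the multi-block matrix lemma preserves the $\{0,1,-1\}$ structure, particularly at adjacent cylinders $C_{i,i+1}$ and $C_{i+1,i+2}$ where rows of $M_{A_{i+1}}$ are modified by two successive reductions. For non-adjacent cylinder pairs the argument is trivial, since $\{i,i+1\}\cap\{j,j+1\}=\emptyset$ means the reductions touch disjoint columns. For the adjacent case I would either order the reductions carefully --- finishing all operations within $M_{A_{i+1}}$ against $M_{A_i}$ before proceeding to the pair $(M_{A_{i+1}}, M_{A_{i+2}})$ --- or repeat the proof of Lemma \ref{matrix} globally with a simultaneous row/column ordering on all blocks. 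Either approach should succeed because each $\sigma$-edge column is localized to two blocks, so the combinatorial obstructions to integrality never propagate beyond a single pair of blocks.
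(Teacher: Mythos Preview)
Your proposal is correct and follows essentially the same route as the paper: set up the block gluing matrix, reduce consecutive pairs of blocks iteratively via Lemma \ref{matrix}, deduce (a version of) Proposition \ref{noloop}, then invoke Proposition \ref{doubling} and linear programming. The one point the paper makes explicit that you leave as an anticipated obstacle is precisely why the sequential reduction preserves the hypotheses of Lemma \ref{matrix}: after reducing $M_{A_2}$ against $M_{A_1}$, the surviving rows of $M_{A_2}$ with zeros in the $M_{1,2}$ overlap correspond geometrically to unions of components from $T_{A_1}$ and $T_{A_2}$ whose only remaining \sedges\ lie on $C_{2,3}$, and two such rows therefore correspond to \emph{non-overlapping} sums of components --- so each $\sigma$-edge column in the $M_{2,3}$ overlap still has at most one $1$ among these rows, exactly the condition needed to continue.
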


\begin{proof} Since each torus has only one set of gluing equations coming from the amalgamation, we write out the gluing system as a block matrix. Call the blocks $M_i$ and use row switches to put each one in row-echelon form. Note that, by construction, the block $M_i$ shares entries in columns with only $M_{i-1}$ and $M_{i+1}$. Call the overlap $M_{i,i-1}$ and $M_{i, i+1}$, respectively. Note that the $M_{i, i+1}$ blocks are in ``block row-echelon form". We begin row reducing the $M_2$ block, as in Lemma \ref{matrix}. At the end of this procedure, either the matrix is fully row reduced, and we can set set all loops not gluing a component from $T_{A_{1}}$ to one from $T_{A_2}$ and we are finished, or else there are reduced rows in $M_2$ with zeroes in every column of $M_{1,2}$. 

 In the first case, the \sloops\ arise only when there are collections of \tedges\ from $T_{A_1}$ and $T_{A_2}$ which cannot be connected by paths that do not cross into a component from $T_{A_3}$. This is a weak form of Proposition \ref{noloop}. 

In the second we row reduce the next block using these zero rows. Since these rows correspond to unions of components from $T_{A_1}$ and $T_{A_2}$ with only \sedges\ between $T_{A_2}$ and $T_{A_3}$ remaining. Thus,  two such reduced rows must correspond to non-overlapping sums. Thus, we can continue to apply the method of Lemma \ref{matrix} and show that loops arise only when there are \tedges\ from $T_{A_i}$ which cannot be connected by a path which stays in components from $T_{A_{i}}$ and $T_{A_{i-1}}$ or  $T_{A_{i}}$ and $T_{A_{i+1}}$ or crossing \sloops.
From this weak form of Proposition \ref{noloop}, we can still use the covering procedure of Proposition \ref{doubling} and obtain the result.
\end{proof}

\section{Applications of the Theorem}

\subsection{Examples and Formulas} While the algorithm described in the previous section is unwieldy, even in simple situations, and suspected to run in double exponential time \cite{SSS}, it is possible to do some calculations without appealing to all of the machinery described.

\begin{prop}\label{formula} Let $G=\gspan{a,b \mid a^p=b^q}$, then $$\scl([a^m, b^n])=\max\left\{\min\left\{\frac{1}{2}- \frac{m}{\mbox{lcm}(m, p)}, \frac{1}{2}-\frac{n}{\mbox{lcm}(n, q)}\right\},0\right\}$$.\end{prop}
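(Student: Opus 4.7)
The plan is to specialize the algorithm of Theorem \ref{AFP} to the single chain $\eta = [a^m, b^n]$, which is already in normal form. Begin by listing the four $\tau$-edges $\tau_1 = a^m,\ \tau_3 = a^{-m} \in T(A)$ and $\tau_2 = b^n,\ \tau_4 = b^{-n} \in T(B)$, arranged cyclically, and read off the compatibility pairing induced by $\eta$: the four $A$-side $\sigma$-edges $(\tau_1,\tau_1),(\tau_1,\tau_3),(\tau_3,\tau_1),(\tau_3,\tau_3)$ pair respectively with the $B$-side $\sigma$-edges $(\tau_4,\tau_2),(\tau_2,\tau_2),(\tau_4,\tau_4),(\tau_2,\tau_4)$. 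Imposing $\partial v = h v = 0$ on both sides together with compatibility cuts the family of compatible pairs down to two free parameters $v_1, v_2 \ge 0$: namely $v = (v_1, v_2, v_2, v_1)$, $w = (v_2, v_1, v_1, v_2)$, and counting $\sigma$-endpoints on $\partial S$ gives $n(S) = v_1 + v_2$.

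The heart of the argument is to compute $\kappa$, and hence $\chi_o^A$, on this family. In $\overline{V_A}$ there are three natural disc vectors: the winding disc $d_1 = (p/\gcd(m,p), 0, 0, 0, m/\gcd(m,p))$ at $\tau_1$, obtained by wrapping the $\tau_1$-loop around $C$ until it becomes trivial in $A$; its mirror $d_2 = (0, 0, 0, p/\gcd(m,p), -m/\gcd(m,p))$ at $\tau_3$; and the $V_A$-bigon $d_3 = (0, 1, 1, 0, 0)$. Solving the linear program that defines $\kappa$ on $v = v_1 (1,0,0,1,0) + v_2 (0,1,1,0,0)$ as a non-negative combination drawn from $\mathcal{D}_A$ plus an element of $\overline{V_A}$ gives $\kappa(v) = 2v_1 \gcd(m,p)/p + v_2$, and subtracting $|v|/2 = v_1 + v_2$ yields $\chi_o^A(v) = v_1 \bigl(2\gcd(m,p)/p - 1\bigr)$. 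The symmetric computation on the $B$-side gives $\chi_o^B(w) = v_2 \bigl(2\gcd(n,q)/q - 1\bigr)$.

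Combining these via Proposition \ref{glueapprox} and Theorem \ref{AFP} reduces $\scl$ to the one-dimensional optimization
$$\scl([a^m, b^n]) = \max\!\left\{0,\; \inf_{\substack{v_1, v_2 \ge 0 \\ (v_1,v_2) \ne 0}} \frac{-\chi_o^A(v) - \chi_o^B(w)}{2(v_1 + v_2)}\right\};$$
the ratio inside is the convex combination of $\tfrac12 - \gcd(m,p)/p$ and $\tfrac12 - \gcd(n,q)/q$ with weights $v_1/(v_1+v_2)$ and $v_2/(v_1+v_2)$, so its infimum is the smaller of the two endpoints, and the identity $\gcd(m,p)/p = m/\mathrm{lcm}(m,p)$ recovers the stated formula; the truncation at zero reflects the fact that when both endpoints are nonnegative the optimizing surface decomposes into discs. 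The main obstacle I anticipate is the Klein function step: one must confirm that no other disc vector in $\overline{V_A}$ improves $\kappa$ on our family. By Lemma \ref{poly} the vertices of $\mathrm{conv}(\mathcal{D}_A + \overline{V_A})$ are finite in number, and a direct check shows that any alternative disc vector $(a, b, b, c, l)$ satisfies $(a+c)\gcd(m,p)/p + b \ge 1$ with equality only for $d_1, d_2, d_3$, so no alternative improves $\kappa$ on the compatible family.
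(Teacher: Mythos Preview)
Your proof is correct and follows essentially the same route as the paper's: set up the four-dimensional spaces $C_2(A)$ and $C_2(B)$, identify the disc vectors $d_1,d_2,d_3$ (the paper writes the first as $\frac{\mathrm{lcm}(m,p)}{m}e_1$, which is your $d_1$ without the explicit winding coordinate), compute $\kappa$ linearly from them, impose compatibility, and minimize the resulting convex combination. Your verification that no further disc vector can lower $\kappa$ on the compatible family via the linear functional $L(x)=(x_1+x_4)\gcd(m,p)/p+(x_2+x_3)/2$ is more explicit than what the paper records, and is correct: $L\ge 0$ on $\overline{V_A}$, $L\ge 1$ on every disc vector, and the representation $v=\frac{v_1\gcd(m,p)}{p}(d_1+d_2)+v_2 d_3$ attains $L(v)$, so $\kappa(v)=L(v)$.

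One small wording slip: in your last sentence you say the truncation at zero is active ``when both endpoints are nonnegative''. It is the other way around---the outer $\max$ with $0$ bites precisely when the smaller of the two quantities $\tfrac12-\gcd(m,p)/p$ and $\tfrac12-\gcd(n,q)/q$ is negative, which is exactly the case in which the optimizing ``surface'' has positive $\chi$ and hence $-\chi^-=0$. This matches the paper's remark that the algorithm can return $-\tfrac12$ when $[a^m,b^n]=1$.
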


\begin{rem}If $[a^m,b^n]=1$ in the group $G$, the stable commutator length is 0. However, the algorithm will give the answer $-\frac{1}{2}$ and produce a disc bounding the word. Thus, we must include the maximum in the formula above.\end{rem}
\begin{proof} Form the vector space $C_2(A)$ with basis $e_1=(a^m, a^m)$, $e_2=(a^m, a^{-m})$, $e_3=(a^{-m}, a^m)$ and $e_4=(a^{-m}, a^{-m})$. The boundary and homology maps tell us that for an admissable vector $\sum v_ie_i$, $v_2=v_3$ and $v_1=v_4$. It remains to determine the Klein function. Clearly, the vector $e_2+e_3$ forms a disc, and less obviously $\frac{\mbox{lcm}(m, p)}{m} e_1$ makes a disc, and similarly for $e_4$ (see Figure \ref{adisc} for an example when $m=4$, $p=6$). Thus $$\kappa(v)=\frac{m}{\mbox{lcm}(m,p)}v_1+\frac{1}{2}v_2+\frac{1}{2}v_3+\frac{m}{\mbox{lcm}(m,p)}v_4.$$
Similar formulas hold for $C_2(B)$, spanned by $f_1=(b^n, b^n)$, $f_2=(b^n, b^{-n})$ and so on, where a vector is given by $w=\sum w_if_i$. In particular:
$$\kappa(w)=\frac{n}{\mbox{lcm}(n,q)}w_1+\frac{1}{2}w_2+\frac{1}{2}w_3+\frac{n}{\mbox{lcm}(n,q)}w_4.$$
Further, the compatibility equations impose the constraints $v_1=w_3$, $v_2=w_1$, $v_3=w_4$ and $v_4=w_2$ and $|v|=|w|=2$. Thus: \begin{align*}\frac{-\chi(v,w)}{2} = & \left(\frac{1}{4}-\frac{m}{2\mbox{lcm}(m,p)}\right)v_1+\left(\frac{1}{4}-\frac{m}{2\mbox{lcm}(m,p)}\right)v_4\\ &+ \left(\frac{1}{4}-\frac{n}{2\mbox{lcm}(n,q)}\right)w_1+\left(\frac{1}{4}-\frac{n}{2\mbox{lcm}(n,q)}\right)w_4.\end{align*}
Since $v_1=v_4$ and $w_1=w_4$, the above simplifies to:
$$\frac{-\chi(v,w)}{2}=\left(\frac{1}{2}-\frac{m}{\mbox{lcm}(m,p)}\right)v_1+\left(\frac{1}{2}-\frac{n}{\mbox{lcm}(n,q)}\right)w_1.$$
It is clear the the minimum is either achieved when $v_1=1$ or when $w_1=1$, which are mutually exclusive possibilities.
\end{proof}

\begin{figure}
\centering
\labellist
\pinlabel $a^4$ at 104 187
\pinlabel $-1$ at 41 125
\pinlabel $-1$ at 156 125
\pinlabel $a^4$ at 195 46
\pinlabel $a^4$ at 5 46
\pinlabel $0$ at 102 16
\endlabellist
\includegraphics[scale=0.5]{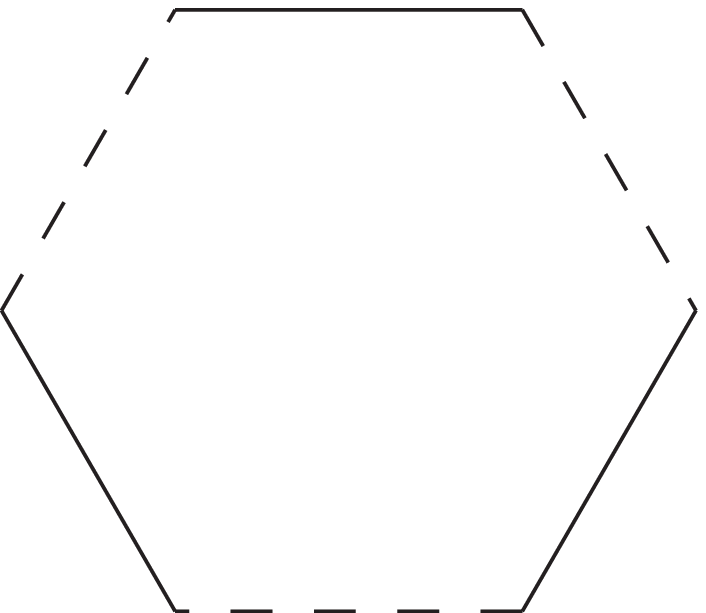}
\caption{A disc for the vector $3(a^4, a^4)$ in the case $p=6$.}
\label{adisc}
\end{figure}
This formula is similar to one that appears in \cite[Corollary 5.3]{Walker:freecyclic}, and we will show in the next section that it is, in fact, a generalization. The formula in the proposition depends only on residue of $m \pmod p$ and $n \pmod q$. This phenomenon is theoretically explained in the next section.

\subsection{Homological Considerations}
The groups $G=\mathbb{Z}^n\ast_{\mathbb{Z}^k}\mathbb{Z}^m$ are naturally central extensions of groups of the form $$H=\left(\mathbb{Z}^{n-k}\times\prod_{i=1}^k\quotient{\mathbb{Z}}{n_i\mathbb{Z}}\right) \ast \left(\mathbb{Z}^{m-k}\times\prod_{i=1}^k\quotient{\mathbb{Z}}{m_i\mathbb{Z}}\right)$$.

This central extension is of a very particular form, since $i(\mathbb{Z}^k)\cap [G,G]=\{1\}$.

\begin{prop}\label{torsion} Let $1\to A\xrightarrow{\,i\, }E\xrightarrow{\,\pi\,}G\to1$ be a central extension with $A$ a free Abelian group so that $i(A)\cap [E,E]$ is empty. Then the induced map $H^2(G;\mathbb{R})\to H^2(E;\mathbb{R})$ is injective. Thus, the Euler class of the extension is a torsion class in $H^2(G;A)$. \end{prop}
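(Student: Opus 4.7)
The plan is to extract this from the five-term exact sequence in cohomology associated to the Lyndon--Hochschild--Serre spectral sequence of the extension. Since $A$ is central, $G$ acts trivially on $H^1(A;\mathbb{R})=\mathrm{Hom}(A,\mathbb{R})$, so the five-term sequence reads
\[
0\to H^1(G;\mathbb{R}) \to H^1(E;\mathbb{R}) \xrightarrow{\,i^*\,} \mathrm{Hom}(A,\mathbb{R}) \xrightarrow{\,d_2\,} H^2(G;\mathbb{R}) \xrightarrow{\pi^*} H^2(E;\mathbb{R}).
\]
The main step is to show that $i^*$ is surjective; by exactness this forces $d_2=0$, and then $\pi^*$ is injective.

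To see the surjectivity of $i^*$, note that $H^1(E;\mathbb{R})=\mathrm{Hom}(H_1(E;\mathbb{Z}),\mathbb{R})$ and the map $i^*$ is precisely restriction along the composition $A\to E\to H_1(E;\mathbb{Z})$. The hypothesis $i(A)\cap[E,E]=\{1\}$ says exactly that this composition $A\to H_1(E;\mathbb{Z})$ is injective. Since $A$ is free abelian and $\mathbb{R}$ is flat over $\mathbb{Z}$, tensoring with $\mathbb{R}$ preserves this injection, giving an inclusion $A\otimes\mathbb{R}\hookrightarrow H_1(E;\mathbb{R})$. Dualizing (using that $A\otimes\mathbb{R}$ is a real vector space so subspaces are direct summands), the restriction $\mathrm{Hom}(H_1(E;\mathbb{R}),\mathbb{R})\to\mathrm{Hom}(A\otimes\mathbb{R},\mathbb{R})=\mathrm{Hom}(A,\mathbb{R})$ is surjective. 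This is $i^*$, so we are done with injectivity.

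For the torsion statement, the standard identification of the transgression $d_2$ in a central extension is that $d_2(\phi)=\phi_*(e)$, where $e\in H^2(G;A)$ is the Euler class of the extension and $\phi_*:H^2(G;A)\to H^2(G;\mathbb{R})$ is induced by $\phi\in\mathrm{Hom}(A,\mathbb{R})$ on coefficients. Since $d_2=0$, we have $\phi_*(e)=0$ for every $\phi\in\mathrm{Hom}(A,\mathbb{R})$. Writing $A\cong\mathbb{Z}^r$ and letting $\phi_1,\dots,\phi_r$ be the dual basis, this says each coordinate $e_j\in H^2(G;\mathbb{Z})$ of $e$ maps to zero in $H^2(G;\mathbb{R})$. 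From the coefficient exact sequence $0\to\mathbb{Z}\to\mathbb{R}\to\mathbb{R}/\mathbb{Z}\to 0$ (or equivalently $0\to\mathbb{Z}\to\mathbb{Q}\to\mathbb{Q}/\mathbb{Z}\to 0$ followed by the inclusion $\mathbb{Q}\hookrightarrow\mathbb{R}$), the kernel of $H^2(G;\mathbb{Z})\to H^2(G;\mathbb{R})$ consists of torsion classes. Hence each $e_j$ is torsion, so $e$ is torsion in $H^2(G;A)$.

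The genuinely nontrivial input is the identification of the transgression $d_2$ with $\phi\mapsto\phi_*(e)$, which I would either cite as standard for central extensions or sketch via the bar resolution. Everything else is bookkeeping with flatness of $\mathbb{R}$ and the five-term sequence.
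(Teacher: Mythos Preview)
Your proof is correct and follows essentially the same route as the paper: both invoke the five-term exact sequence of the Lyndon--Hochschild--Serre spectral sequence and reduce the injectivity of $\pi^*$ to the surjectivity of the restriction map $H^1(E;\mathbb{R})\to\mathrm{Hom}(A,\mathbb{R})$, which in turn comes from the hypothesis that $A$ injects into the abelianization of $E$. The only differences are presentational: the paper proves surjectivity by explicitly choosing generators of $E/[E,E]$ adapted to $i(A)$ and writing down an extension of a given $\phi$, whereas you phrase the same step via flatness of $\mathbb{R}$ and dualizing an inclusion of vector spaces; and for the torsion statement the paper just invokes the Universal Coefficient Theorem (using $H^*(G;A)\cong H^*(G;\mathbb{Z})\otimes A$ for $A$ free abelian), while you spell out the transgression identification $d_2(\phi)=\phi_*(e)$ and deduce that each integral component of $e$ dies in $H^2(G;\mathbb{R})$.
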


\begin{proof}
We consider the spectral sequence 
$$1\to H^1(G;\mathbb{R})\to H^1(E;\mathbb{R})\to H^1(A;\mathbb{R})\to H^2(G;\mathbb{R})\to H^2(E;\mathbb{R})$$
See \cite{McCleary:spectral} for background on spectral sequences. Since the real first cohomology of any group is simply the space of homomorphisms to $\mathbb{R}$, we need to show that the restriction map $\text{Hom}(E;\R)\to\text{Hom}(A;\R)$ is surjective (\emph{i.e}, that every homomorphism $A\to\R$ extends to a homomorphism $E\to\R$).

Let $\phi\colon A\to\R$ be a homomorphism. We will extend this to a homomorphism $\phi\colon E\to\R$. To do this, it suffices to define $\overline{\phi}$ on $\quotient{E}{[E,E]}$.

Since $i(A)\cap[E,E]=\emptyset$, the induced map $A\xrightarrow{\,i\,}E\to\quotient{E}{[E,E]}$ is injective, and $A$ injects into the free part of the abelianization of E. We will denote elements of the abelianization by $\overline{g}$. Let $A=\gspan{a_1, \ldots, a_k}$, then there exist generators $\overline{g_1},\ldots, \overline{g_k},\ldots,\overline{g_n}$ of $\quotient{E}{[E,E]}$ so that $\overline{i(a_i)}=r_i\overline{g_i}$. Define $$\overline{\phi}(\overline{g_i})=\begin{cases} \displaystyle\frac{\phi(a_i)}{r_i}, & \text{ if } 1\le i\le k\\
0, & \text{ otherwise}\end{cases}.$$
Now, $\overline{\phi}:\quotient{E}{[E,E]}\to\R$ lifts to a homomorphism $\phi:E\to\R$. Thus the map $H^1(E;\R)\to H^1(A;\R)$ is surjective.

By the exactness of the sequence, this implies that $$\text{ker}\left(H^2(G;\R)\to H^2(E;\R)\right)=0.$$

The second statement of the Proposition follows immediately from the above and the Universal Coefficient Theorem \cite[Chapter 3.A]{Hatcher}, since $H^*(G;A)=H^*(G;\mathbb{Z})\otimes A$ whenever $A$ is a torsion free.
\end{proof}
\begin{rem} Gersten proved in \cite{Gersten:bdd} that if $A=\mathbb{Z}$, and the image of a class in $H^2(G;\R)$ is bounded, it is a bounded integral class. In the same paper, he proved that extensions with bounded Euler class are quasi-isometric to $G\times\Z$.\end{rem}

\begin{rem} The groups $\mathbb{Z}^n\ast_{\mathbb{Z}^k}\mathbb{Z}^m$ are naturally quasi-isometric to $\left(\Z^{n-k}\times T\right)\ast\left(\Z^{m-k}\times T'\right)\times\Z^k$, where $T$ and $T'$ are torsion. They are not, however, virtually direct product of the form $\left(A\ast B\right)\times \Z^k$ with $A$ and $B$ free abelian groups. They are virtually of the form $\left(A\ast B \ast \Z \ast \Z \ast \cdots \ast \Z\ast\Z\right)\times\Z^k$, where there are $2k$ many $\Z$ free factors. Computationally, this is a challenging situation for the algorithm in \cite{SSS} and information is also lost when passing to finite index subgroup.

Thus, the algorithm in this paper provides a direct approach to free products of abelian groups with torsion, and can in fact detect phenomena not apparent using the algorithm in \cite{SSS}. For instance, in the group $B_3=\gspan{a, b\mid a^2=b^3}$, $\scl([a,b])=0$, which is not obvious when considering its finite index $F_2\times \mathbb{Z}$ subgroup, since $[a,b]\neq 1$. In fact, working through the algorithm, we determine that $[a,b]=[a,B]$, which is conjugate to $[b,a]=[a,b]^{-1}$, detecting a mirror in the braid group.
\end{rem}

Considering Proposition \ref{torsion}, the following is an extension of \cite[Proposition 4.30]{SCL}.

\begin{prop}\label{isometry}  Let $$1\to A\xrightarrow{\,i\,}E\xrightarrow{\,\pi\,}G\to 1$$ be a central extension so that the induced map $H^2(G;\R)\to H^2(E;\R)$ is injective. Then the projection map $E\xrightarrow{\,\pi\,} G$ is an isometry for $\scl$.\end{prop}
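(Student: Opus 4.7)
The plan is to invoke Bavard duality on both $E$ and $G$ and set up a correspondence (modulo homomorphisms) between their homogeneous quasimorphisms that preserves the ratio $\phi(\cdot)/(2D(\phi))$. Write $\pi\colon E\to G$ for the projection, fix $\tilde g\in [E,E]$, and set $g=\pi(\tilde g)\in[G,G]$. The inequality $\scl_G(g)\le\scl_E(\tilde g)$ is immediate: any expression of $\tilde g^n$ as a product of $c$ commutators in $E$ pushes forward under $\pi$ to an expression of $g^n$ as $c$ commutators in $G$, so $\cl_G(g^n)\le\cl_E(\tilde g^n)$ and the $\scl$ inequality follows by dividing by $n$ and letting $n\to\infty$.

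For the reverse inequality I will apply Bavard duality, which expresses $\scl_H(h)$ as $\sup_\phi \phi(h)/(2D(\phi))$ over homogeneous quasimorphisms $\phi$ modulo homomorphisms. Given $\phi\in Q(E)$, the restriction $\phi\circ i\colon A\to\R$ is in fact a genuine homomorphism: on any abelian subgroup, homogeneity together with the defect bound $|\phi(a^nb^n)-\phi(a^n)-\phi(b^n)|\le D(\phi)$ gives true additivity after dividing by $n$ and letting $n\to\infty$.

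The crucial input is that $\phi\circ i$ extends to a homomorphism $\tilde\phi\colon E\to\R$. Via the five-term exact sequence of the central extension
\[
0\to H^1(G;\R)\to H^1(E;\R)\to H^1(A;\R)\to H^2(G;\R)\to H^2(E;\R)
\]
(where centrality of $A$ makes the $G$-action on $H^1(A;\R)$ trivial, so one may omit the $G$-invariants), the hypothesis that the last arrow is injective is exactly equivalent to surjectivity of the restriction $\mathrm{Hom}(E,\R)\to\mathrm{Hom}(A,\R)$, which supplies the desired $\tilde\phi$. Then $\phi-\tilde\phi$ is a homogeneous quasimorphism on $E$ that vanishes on $i(A)$; because $i(A)$ is central, one has $(e\cdot i(a))^n = e^n\cdot i(a)^n$, so applying the defect inequality to this identity, using homogeneity, and dividing by $n$ forces $\phi-\tilde\phi$ to be exactly constant on each fibre of $\pi$. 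Hence $\phi-\tilde\phi$ descends to a homogeneous quasimorphism $\psi$ on $G$ with $\psi\circ\pi=\phi-\tilde\phi$, satisfying $D(\psi)=D(\phi)$ (homomorphisms have zero defect) and, because $\tilde\phi$ vanishes on $[E,E]\ni\tilde g$, also $\phi(\tilde g)=\psi(g)$.

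Putting this together, $\phi(\tilde g)/(2D(\phi))=\psi(g)/(2D(\psi))$, so the supremum on the $E$ side is bounded above by the supremum on the $G$ side, yielding $\scl_E(\tilde g)\le\scl_G(g)$ via Bavard duality. The main obstacle I anticipate is the descent step — showing that $\phi-\tilde\phi$ genuinely descends to $G$ rather than merely being $A$-invariant up to bounded error — but this is exactly where centrality is essential, since $(ea)^n=e^na^n$ lets homogeneity upgrade the defect estimate to exact invariance.
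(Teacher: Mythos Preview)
Your argument is correct. Note, however, that the paper does not actually supply a proof of this proposition: it is simply stated as an extension of \cite[Proposition~4.30]{SCL} and left at that. Your approach---restrict a homogeneous quasimorphism $\phi$ on $E$ to the central kernel $A$ to obtain a homomorphism, invoke the five-term exact sequence together with the injectivity hypothesis to extend this to a homomorphism $\tilde\phi\colon E\to\R$, then use centrality (via $(ea)^n=e^na^n$) to show $\phi-\tilde\phi$ descends to $G$ with the same defect---is precisely the Bavard duality argument one would extract from Calegari's book, adapted to the cohomological hypothesis here in place of a bounded Euler class assumption. The descent step, which you correctly flagged as the delicate point, is handled properly. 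The only cosmetic gap is that you treat a single element $\tilde g\in[E,E]$ rather than arbitrary chains in $B_1^H(E)$, but Bavard duality applies to chains as well and the extension is routine.
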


Thus for groups of the form $\mathbb{Z}\ast_\mathbb{Z}\mathbb{Z}$, their projections to free products of cyclic groups, as studied extensively in \cite{Walker:freecyclic} is scl preserving. In fact, it is possible to extend the above proposition to the groups handled in Theorem \ref{MAFP}. When considering the case when $A_i=\mathbb{Z}$ for all $i$, these amalgamated free products project to any free product of finite cyclic groups.

\subsection{Quasirationality in Free Products of Cyclic Groups}
Propositon \ref{isometry} allows us to study $\scl$ in free products of cyclic groups by instead looking at an amalgamated free product. Walker studied $\scl$ in free products of cyclic groups in \cite{Walker:freecyclic}, and posed the following question:

\begin{ques} Let $G=\ast \quotient{\Z}{p_i\Z}$. For a fixed word $w\in F_r$, and $\overline{w}$, its projection to $G$, is it true that $\scl(\overline{w})$ varies like a quasilinear function in $\frac{1}{p_i}$, for $p_i$ sufficiently large?\end{ques}

To analyze this question, we will ask the analogous question for amalgamated free products of free Abelian groups, as in Theorems \ref{AFP} and \ref{MAFP}. 

\begin{defn} A function $P(n)$ is called a \emph{quasipolynomial} if there is a least positive integer $\pi$ and a finite set of polynomials, $\{P_0(n), \ldots, P_{\pi-1}(n)\}$, so that for any integer $n$, with $n\equiv k \pmod{\pi}$ and $0\le k \le \pi -1$ we have that $P(n)=P_{k}(n)$.\end{defn}

In other words, $P(n)$ is a polynomial whose coefficients can vary, but depend only on the residue of $n$ modulo $\pi$. Further, $\pi$ is called the \emph{period} of $P(n)$, and its \emph{degree} is the maximal degree of $P_i(n)$, for $0\le i\le \pi-1$. For example, the formula in Proposition \ref{formula} for $\scl([a^m, b^n])$ is a quotient of two quasipolynomials in $p$ and $q$.

In this section we will prove that, fixing a word $w$, $\scl(w)$ behaves quasirationally as the amalgamated free product of free Abelian groups is changed. The following theorem is critical to the proof below.

\begin{thm} \cite[Theorem 3.5]{CalWa:quasipoly} \label{QIQvertices} For $1\le i\le k$ let $v_i(n)$ be a vector in $\mathbb{R}^d$ whose coordinates are rational functions of $n$ of size $O(n)$ and let $V_n=\{v_i(n)\}_{i=1}^k$. Let $S_n$ be the convex hull of the integer points in the convex hull of $V_n$. Then for $n>>0$, there exists some $\pi\in\mathbb{Z}$ so that the vertices of $S_{\pi n+i}$ are the columns of a matrix whose entries are integer polynomials in the variable $n$.\end{thm}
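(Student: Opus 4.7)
The plan proceeds in three steps: reduction to polynomial dependence, stabilization of the combinatorial type of the outer polytope $\mathrm{conv}(V_n)$, and a local analysis of the integer hull $S_n$ face by face. I will prove the conclusion after passing to a single residue class $n \equiv a \pmod{\pi_0}$ for a suitably chosen initial period $\pi_0$; the final period $\pi$ will be the least common multiple of the periods arising at each stage.

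First I would clear denominators. Let $Q(n)$ be the product of the denominators appearing in the coordinates of all the $v_i(n)$, and choose $\pi_0$ so that on each residue class modulo $\pi_0$ the polynomial $Q(n)$ has a fixed sign and is nonvanishing for $n$ large. On such a class the scaled vectors $Q(n)\,v_i(n)$ have integer polynomial coordinates in $n$, and the hypothesis that the $v_i(n)$ are of size $O(n)$ forces these polynomials to be of controlled degree. Next, I would establish combinatorial stabilization: for each subset $I \subseteq \{1,\ldots,k\}$ the predicate ``$\{v_i(n) : i\in I\}$ spans a face of $\mathrm{conv}(V_n)$'' is a finite conjunction of polynomial inequalities in $n$ (sign conditions on determinantal minors and half-space tests against the remaining $v_j(n)$). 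Each nonzero polynomial is eventually of constant sign, so on a further residue class and for $n$ large the face lattice of $\mathrm{conv}(V_n)$ is independent of $n$, and each facet is supported by a hyperplane $\langle a(n),x\rangle = b(n)$ with $a(n), b(n)$ integer polynomials in $n$.

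With this stabilization in place I would analyze the vertices of $S_n$ face by face. Each vertex of $S_n$ lies on $\partial \mathrm{conv}(V_n)$ and is a vertex of the local \emph{sail}: the convex hull of lattice points immediately inside the supporting hyperplane of some face $F$, restricted to the cone of directions compatible with the adjacent facets. For fixed $F$, this local sail depends only on the image of $(a(n), b(n))$ in the dual torus $\mathbb{R}^d/\mathbb{Z}^d \times \mathbb{R}/\mathbb{Z}$ together with the polynomial data of the neighboring facets. Since $a(n), b(n)$ are integer polynomials, these residues are periodic in $n$ with some period $\pi_F$, and within each such residue class a uniform integer-programming procedure produces the local sail vertices as integer polynomial functions of $n$. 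Letting $\pi$ be the lcm of $\pi_0$ and the finitely many $\pi_F$ (finitely many by the stable combinatorial type) assembles all vertices of $S_n$ into a matrix whose entries are integer polynomials in the variable $n$.

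The main obstacle will be controlling vertices of $S_n$ that arise near lower-dimensional faces, where several facets meet and the local lattice geometry can be delicate in high dimension, so that the ``local sail vertex'' description above is less immediate than in the facet case. To make this uniform I would appeal to quantitative parametric integer programming in the spirit of Kannan and Barvinok: the integer hull of a rational polytope whose defining halfspaces depend polynomially on a parameter admits a quasipolynomial description with period determined by the stabilized combinatorial type. This is the key external input that lets the per-facet analysis cohere into the single quasipolynomial statement claimed in the theorem.
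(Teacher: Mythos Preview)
The paper does not prove this theorem; it is quoted verbatim as \cite[Theorem 3.5]{CalWa:quasipoly} and used as a black box in the proof of Theorem \ref{convergence}. So there is no ``paper's own proof'' to compare your proposal against --- you have sketched an argument for an external result that the author simply imports.

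As to the sketch itself, the overall architecture (clear denominators, stabilize the face lattice by sign conditions on polynomial minors, then do a local lattice analysis near each face) is the right shape, and is indeed close in spirit to how Calegari--Walker argue. But one step is not correct as written: you assert that each vertex of $S_n$ lies on $\partial\,\mathrm{conv}(V_n)$. This is false in general --- a vertex of the integer hull can sit strictly in the interior of the ambient polytope (take a small triangle with irrational vertices containing a handful of lattice points). What is true, and what the sail picture actually gives, is that each vertex of $S_n$ is \emph{visible} from some facet hyperplane, i.e.\ it maximizes a facet normal over the lattice points of $\mathrm{conv}(V_n)$; the vertex is then within a bounded lattice distance of that facet, and the bound is uniform once the combinatorics stabilize. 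Your last paragraph implicitly concedes this and defers to parametric integer programming, which is fine, but you should not phrase the facet-by-facet analysis as if the vertices literally sit on the boundary.
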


This condition is called \emph{QIQ}. It is important to be careful here because for different values of $i$, it is possible that there are different numbers of vertices, so that the matrices for each $i$ are not the same size. The process of looking at $\pi n + i$ instead of $n$ is called \emph{passing to a cycle}. It is also clear that, since these integer points are in the convex hull of $V_n$, their coordinates are also of size at most $O(n)$.

Further, if $V_n$ is instead a collection of integral linear extremal vectors of a polyhedral cone, the analogous result is true.

\begin{cor} \cite[Corollary 3.7]{CalWa:quasipoly} \label{coneQIQ} Let $V_n$ be a family of cones with integral linear extremal vectors. Then the integer hull (open or closed) of $V_n-0$ is QIQ.\end{cor}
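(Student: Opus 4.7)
The plan is to deduce the corollary from Theorem \ref{QIQvertices} by truncating each cone $V_n$ to a bounded polytope and tracking which vertices of the resulting integer hull belong to the integer hull of $V_n-0$. Write $V_n$ as the conical hull of its integer extremal vectors $v_1(n),\ldots,v_r(n)$, each of coordinate size $O(n)$. The first ingredient I would need is a size bound: every vertex of the integer hull of $V_n-0$ lies in the fundamental parallelepiped $\{\sum t_i v_i(n) : 0\le t_i\le 1\}$. This is essentially Gordan's lemma for rational cones --- a vertex of the integer hull that sits above the parallelepiped could be written as a positive combination of an extremal $v_i(n)$ and another integer point of the cone, contradicting extremality. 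Consequently every such vertex has coordinates of size $O(n)$.

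Next I would truncate. Choose a rational linear functional $\phi$, independent of $n$, that is strictly positive on the interior of every cone in the family (after a coordinate change if necessary), and pick a positive integer polynomial $T(n)$ with $T(n)>\phi(v)$ for each vertex $v$ of the integer hull of $V_n-0$; by the size bound this is possible with $T(n)=O(n)$. The truncated polytope
\[
P_n \;=\; V_n\,\cap\,\{x:\phi(x)\le T(n)\}
\]
is the convex hull of $0$ and the scaled extremal vectors $u_i(n)=(T(n)/\phi(v_i(n)))\,v_i(n)$, whose coordinates are rational functions of $n$ of size $O(n)$. Theorem \ref{QIQvertices} then applies: after passing to a cycle modulo some period $\pi$, the vertices of the integer hull of $P_n$ are the columns of a matrix of integer polynomials in $n$.

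Finally I would identify the correct subset. By construction, the vertices of the integer hull of $V_n-0$ are exactly those vertices of the integer hull of $P_n-0$ that do not lie on the capping hyperplane $\{\phi=T(n)\}$; the remaining vertices of $P_n$'s integer hull are artifacts of the truncation and are generated in the cone by the genuine vertices and the $u_i(n)$. Whether a polynomial vertex $p(n)$ satisfies $\phi(p(n))<T(n)$ is a polynomial inequality in $n$, hence is either true for all large $n$ in a given residue class or false for all such $n$, so after refining the cycle the correct subset is selected uniformly. The open-versus-closed version is handled by replacing the condition $\phi(x)<T(n)$ in the final step with $\phi(x)\le T(n)$ or by replacing $V_n-0$ with the open cone before taking the integer hull; in either case the polynomial selection step is the same.

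The main obstacle is the Gordan-type size bound in the first step, together with the need to ensure that after passing to a cycle no polynomial vertex lies \emph{exactly} on the cap. Choosing $T(n)$ of the form $1+\sum_i \phi(v_i(n))$ guarantees strict separation when a genuine vertex has a strictly smaller $\phi$-value, and the remaining boundary cases are classified by finitely many polynomial (in)equalities in $n$, all of which stabilize on each residue class. This is exactly the kind of bookkeeping that Calegari and Walker package in their QIQ framework.
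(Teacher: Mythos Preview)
The paper does not supply its own proof of this corollary; it is simply quoted from \cite{CalWa:quasipoly} and used as a black box, so there is nothing in the present paper to compare your argument against.

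On the merits, your reduction has a real gap in the final identification step. You apply Theorem~\ref{QIQvertices} to the truncation $P_n=V_n\cap\{\phi\le T(n)\}$, which contains the origin, and obtain the vertices of the integer hull of $P_n$. You then want to read off the vertices of the integer hull of $V_n\setminus\{0\}$ as ``those vertices of the integer hull of $P_n\setminus\{0\}$ not on the cap''. The problem is that the theorem hands you the vertices of the integer hull of $P_n$, not of $P_n\setminus\{0\}$, and these differ exactly where it matters: the sail vertices you are after are typically \emph{not} vertices of the integer hull of $P_n$, because the presence of $0$ destroys them. Concretely, take the cone in $\mathbb{R}^2$ generated by $(2,1)$ and $(1,2)$. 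The point $(1,1)$ is a vertex of the integer hull of the cone minus the origin, but it lies on the segment from $(0,0)$ to $(3,3)$, so it is never a vertex of the integer hull of any truncation $P_n$ that contains $0$. Your selection rule therefore cannot find it.

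The fix is to truncate from below as well. If $\phi$ is chosen with integer coefficients and is strictly positive on $V_n\setminus\{0\}$, then $\phi(w)\ge 1$ for every nonzero integer point $w\in V_n$; hence taking $P_n'=V_n\cap\{\tfrac{1}{2}\le\phi\le T(n)\}$ removes $0$ while retaining all nonzero integer points of $P_n$. The vertices of $P_n'$ are still rational functions of $n$ of size $O(n)$, so Theorem~\ref{QIQvertices} applies to $P_n'$, and now the sail vertices genuinely appear among the vertices of its integer hull. Your cap--selection argument (distinguishing $\phi<T(n)$ from $\phi=T(n)$ by a polynomial inequality stable on residue classes) then goes through.
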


In what follows, we consider only the case where $k=2$ (for notational ease), though the general case follows using the same arguments. Further, to emphasize the dependence on the group $\Z\ast\Z$, we will write $\overline{w}=w_{p,q}$ to mean the projection to $\gspan{a, b: a^p=b^q}$.

Consider some work $w\in F_2$. Then by Proposition \ref{isometry}, we know that $\scl(w_{p,q})$ is equal to the stable commutator length of that word in the corresponding free product of cyclic groups.

Now, for every choice of $p$ we obtain polyhedra $V_A(p)$ and $V_B(q)$ and corresponding Klein functions $\kappa_{A,p}$ and $\kappa_{B,q}$ corresponding to data about the integer hulls of the disc faces of $V_A(p)$ and $V_B(q)$. Since the $w_{p,q}$ are combinatorially identical, we can prove the following lemma, similar to a statement in \cite[Section 4.4]{SSS}, which states that the polyhedron $V_A(p)$ satisfies the criteria of Corollary \ref{coneQIQ}.

\begin{lem}\label{quasilinear} There are extremal integral vectors for $V_A(p)$ whose coordinates are quasilinear in $p$.\end{lem}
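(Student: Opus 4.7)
The plan is to realize $V_A(p)$ as a polyhedral cone in a $p$-independent ambient vector space cut out by linear constraints whose coefficients are affine functions of $p$, and then to show that the primitive integral extremal rays of such a parametric family are quasilinear in $p$.

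First, I would note that the basis $T_2(A)$ of $C_2(A)$ is determined combinatorially by $w$ and is independent of $p$, so $C_2(A)$, along with its extension $\overline{C_2(A)}$ (which adds one coordinate for $a^p$), provides a fixed ambient vector space for all $p$. Since $\pi : \overline{V_A}(p) \to V_A(p)$ is a bijection by the observation following the notation defining $\overline{C_2(A)}$, it is equivalent to analyze extremal integral rays of $\overline{V_A}(p)$. In the defining system $\partial(v)=0$, $h(v)=0$, $v \geq 0$, the map $\partial$ has no $p$-dependence and $h$ depends on $p$ only through the single assignment $h(a^p) = -pa$. Consequently the constraint matrix has entries that are affine functions of $p$, with the $p$-dependent entries confined to a single row and a single column.

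Second, I would use the following standard description of extremal rays of such a cone. Each extremal ray of $\overline{V_A}(p)$ corresponds to a minimal support on which the defining linear system cuts out a one-dimensional solution space. On such a minimal support, by Cramer's rule, an extremal vector has coordinates given up to sign by the maximal minors of the constraint matrix restricted to that support. Because the constraint matrix has entries that are affine in $p$, each such minor is a polynomial in $p$ of bounded degree (in fact of degree at most one, since each minor involves at most one $p$-dependent entry). Dividing by the greatest common divisor of these polynomial coordinates yields the primitive integral vector on the ray; that $\gcd$, viewed as a function of $p$, is eventually periodic in $p$, so the resulting primitive coordinates are quasilinear in $p$ for $p$ sufficiently large.

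Third, I would check that the combinatorial type of the cone stabilizes in $p$, i.e.\ that the set of supports producing extremal rays is the same for all $p$ in a fixed residue class modulo some period $\pi$. Since the collection of candidate supports is finite, and since for each candidate the non-negativity of the corresponding real extremal ray is governed by the signs of finitely many affine-in-$p$ minors, these signs stabilize in each residue class for $p$ large. Projecting via $\pi$ back to $V_A(p)$ preserves these primitive integral generators, yielding the desired quasilinear extremal integral vectors.

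The main obstacle I anticipate is bookkeeping the quasi-periodic behavior: one must show that the $\gcd$-clearing, the stabilization of supports, and any sign changes in the Cramer's-rule minors all package together into a single description on a common period $\pi$. This should be a routine, if tedious, application of the fact that $\gcd$'s of polynomial families are eventually periodic. Once arranged, the statement produced is exactly the hypothesis required to apply Corollary \ref{coneQIQ} in what follows.
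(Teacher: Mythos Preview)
Your argument is correct and takes a genuinely different route from the paper's proof. The paper invokes a concrete combinatorial classification of the extremal rays of $\overline{V_A}$ due to Calegari \cite[Lemma 4.11]{SSS}: they correspond to embedded oriented cycles $\phi$ in the graph $\Gamma$ (with $h(\phi)=0$), to certain positive combinations $h(\phi')\phi - h(\phi)\phi'$ of pairs of such cycles, or to a cycle $\phi$ together with an integer $v_\phi$ in the extra coordinate making $h(\phi+v_\phi)=0$. Only the third type depends on $p$, and there the quasilinearity is read off directly, with period $h(\phi)$. Your approach instead treats $\overline{V_A}(p)$ as a generic parametric cone cut out by constraints affine in $p$ (with the $p$-dependence localized to the single entry $h(a^p)=-pa$), and extracts quasilinearity of primitive extremal vectors from Cramer's rule together with the eventual periodicity of integer $\gcd$'s along polynomial families. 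The paper's argument is more explicit---it names the period and identifies exactly which rays move with $p$---but it depends on importing the cycle description from \cite{SSS}. Your argument is more self-contained and would apply verbatim to any family of cones with affine-in-parameter defining data, at the cost of the bookkeeping you flag in your final paragraph; that bookkeeping is indeed routine, and your observation that each minor has degree at most one in $p$ (since only a single matrix entry carries $p$) is the key reason the outcome is quasi\emph{linear} rather than merely quasipolynomial.
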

\begin{proof} This follows from the description of extremal rays given in \cite[Lemma 4.11]{SSS}. Recall from Section 4.4 that $\Gamma$ is graph whose vertices are in one-to-one corresponce with $T(A)$ and edges in one-to-one correspondence with $T_2(A)$. For an edge-path $\phi$ in $\Gamma$, we will let $h(\phi)$ be the sum of the values of $h$ on the elements of $T_2(A)$ corresponding to the edges in $\phi$. 
Then the extremal rays of $V_A$ correspond to either:
\begin{enumerate}
\item embedded oriented cycles $\phi$ in $\Gamma$ with $h(\phi)=0$;
\item $h(\phi')\phi-h(\phi)\phi'$, where $\phi, \phi'$ are distinct oriented embedded cycles in $\Gamma$ with $h(\phi')>0$ and $h(\phi)<0$;
\item a pair $\phi$ and an integer $v_\phi$ so that $h(\phi+v_\phi)=0$.
\end{enumerate}

Note that only the third option is affected by the choice of $p$. In fact, in the case of the groups $G_{p,q}$ it is possible to ignore the second option all together, since for a high enough power of any oriented embedded cycle, $h(\phi)$ will have a multiple divisible by $pa$.
The necessary multiple, and $v_\phi$ are determined quasilinearly in $p$, with period $h(\phi)$.
\end{proof}

Now, the combinatorics of $V_A(p)$ are identical for every $p$, the disc faces of $V_A(p)$ are the same and, by the previous lemma, satisfy the criteria of Corollary \ref{coneQIQ}. Thus, we can conclude that that vertices of $\mathcal{D}_A(p)$ are QIQ in $p$.

\begin{defn}\label{pqnorm} For a given $p,q\in\Z$, let $\scl_{p,q}(w)=\scl(w_{p,q})$ in the group $\Z\ast_{\Z}\Z=\gspan{a,b\mid a^p=b^q}.$ Further, denote by $\|\cdot\|_{p,q}$ the induced semi norm on $B_1^H(F_2;\R)$.\end{defn}

As in \cite{CalWa:quasipoly}, we analyze the unit  balls in the norms described in Definition \ref{pqnorm} for every $p,q$. Walker proved in \cite{Walker:freecyclic} that as $p,q\to\infty$ the norms $\|\cdot\|_{p,q}$ converge to the standard $\scl$ norm on $B_1^H(F_2;\R)$, \textit{i.e.}, the unit balls in the norms converge to the $\scl$ norm ball in the free group.

To understand this convergence we need to study the function $\kappa_{A,p}(v)$, which is the Klein function on $\text{conv}(\mathcal{D}_A(p)+V_A(p))$. Recall from Lemma \ref{poly} that the set of vertices of this polyhedron is a subset of the vertices of $\text{conv}(\mathcal{D}_A(p))$ and thus by Corollary \ref{coneQIQ} have coordinates which are quasilinear polynomials in $p$.

From here, we can determine precisely how the functions $\kappa_p(v)$ depend on $p$. For the next Theorem, it is necessary that we have a chain $w$ in normal form, as defined in Section 2. Then every surface admissible for $w$ comes from a fixed subset of $V_A$. Let $v_{a}$ be the component of $v$ corresponding the winding numbers from the amalgamation. We define:
$$V_A^{(0)}=V_A\cap \{v_{a}=0\}.$$

Note that that when varying $p$ and $q$, the polyhedral cones $V_A^{(0)}(p)$ and $V_B^{(0)}(p)$ are completely independent of $p$ and $q$. In what follows, we will drop the dependence on $p$ and $q$.

\begin{thm}\label{convergence} Let $w_1,\ldots,w_k$ be a collection of rational chains in $B_1^H(F_2;\R)$ and let $B_{p,q}$ be the unit ball in the norm $\|\cdot\|_{p.q}$. Then the vertices of $B_{p,q}$ have coordinates which are eventually quasirational in $p$ and $q$.\end{thm}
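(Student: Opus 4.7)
The plan is to unwind the computation of $\|\cdot\|_{p,q}$ into a linear program whose data (feasibility constraints and objective) depend quasirationally on $p$ and $q$, and then to argue that the vertices of the feasible polytope--which are what we need to understand--are themselves quasirational in the parameters. The starting point is that by Theorem \ref{AFP}, $\scl_{p,q}(\eta)$ is obtained as the minimum of $\chi_o^A(v) + \chi_o^B(w)$ over pairs $(v,w) \in V_A(p) \times V_B(q)$ which are compatible and whose boundary represents $\eta$.

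First I would assemble the quasirational data. By Lemma \ref{quasilinear}, $V_A(p)$ and $V_B(q)$ have extremal integral vectors which are quasilinear in $p$ and $q$ respectively; the combinatorics of $\Gamma$ and of the disc vectors does not depend on $p$ or $q$. Applying Corollary \ref{coneQIQ} to the cones over $\mathcal{D}_A(p)$ and $\mathcal{D}_B(q)$, the vertices of $\mbox{conv}(\mathcal{D}_A(p) + \overline{V_A(p)})$ and $\mbox{conv}(\mathcal{D}_B(q) + \overline{V_B(q)})$ are QIQ in $p$ and $q$, respectively--that is, after passing to a cycle modulo some $\pi$, they are the columns of matrices whose entries are integer polynomials in $p$ and $q$. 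Then by Corollary \ref{kappa}, each of $\kappa_{A,p}, \chi_o^A, \kappa_{B,q}, \chi_o^B$ is a minimum of finitely many linear functionals whose coefficients (the dual normal vectors divided by the integer $a_j$) are quasirational in $p$ or $q$.

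Next I would set up the unit ball $B_{p,q}$ inside the fixed finite-dimensional subspace $V = \mbox{span}\{w_1, \ldots, w_k\} \subset B_1^H(F_2;\mathbb{R})$. For each chain $\eta \in V$ in normal form the set of admissible pairs $(v,w)$ is cut out of $V_A^{(0)} \times V_B^{(0)}$ (a cone independent of $p,q$) by the linear compatibility equations between $\sigma$-edges; these equations also do not depend on $p,q$. The objective $(v,w) \mapsto \chi_o^A(v) + \chi_o^B(w)$ is the minimum of finitely many linear functionals in $(v,w)$ whose coefficients are quasirational in $(p,q)$. Linear programming then produces $\scl_{p,q}$ on $V$ as the value of a parametric LP; the dual feasible polytope has finitely many combinatorial types, and for $p,q$ large enough the combinatorial type is constant along each residue class modulo some period. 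Within each such class the optimal solution and the functionals defining the facets of $B_{p,q}$ are solutions of fixed linear systems with quasirational coefficients, so $\|\cdot\|_{p,q}$ is a quasirational piecewise linear function on $V$ and the vertices of $B_{p,q}$ are its quasirational extreme points.

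Finally I would incorporate the added $\sigma$-loops from Proposition \ref{doubling} and Proposition \ref{glueapprox}: these affect only the gluing step, and by Proposition \ref{doubling} the loop count is controlled independently of covering degree, so it contributes a vanishing correction to $-\chi/(2n)$ and does not disturb the quasirational dependence. The main obstacle, and the place where care is required, is the combinatorial stabilization step: even though each of the polytopes $\mbox{conv}(\mathcal{D}_A(p)+\overline{V_A(p)})$ has QIQ vertex set, the LP that selects the optimal $(v,w)$ for a given $\eta$ could, in principle, jump combinatorial types as $p,q$ vary. Overcoming this requires the standard parametric LP argument (as in \cite{CalWa:quasipoly}): the vertex correspondence stabilizes eventually along arithmetic progressions of $(p,q)$, and on each such progression the extremal rays of $B_{p,q}$ are given by rational functions of $(p,q)$, yielding the asserted quasirationality.
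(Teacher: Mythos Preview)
Your proposal is correct and follows essentially the same route as the paper: reduce to showing that $\kappa_{A,p}$ and $\kappa_{B,q}$ restricted to the fixed cone $V_A^{(0)}\times V_B^{(0)}$ are minima of linear functionals with quasirational coefficients (via Lemma~\ref{quasilinear}, Corollary~\ref{coneQIQ}, and Corollary~\ref{kappa}), then push the sublevel set $\{\chi_o\le 1\}$ forward under the linear boundary map to the span of $w_1,\ldots,w_k$ and invoke the parametric stabilization argument from \cite{CalWa:quasipoly}. The paper's own proof is more compressed---it packages your entire parametric LP discussion into a single citation of \cite[Theorem~4.6]{CalWa:quasipoly}---but the content is the same; your paragraph on $\sigma$-loops via Propositions~\ref{doubling} and~\ref{glueapprox} is harmless but unnecessary here, since that correction is already absorbed into the definition of $\chi_o$ and Theorem~\ref{AFP}.
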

\begin{proof} 
Corollary \ref{kappa} implies that the function $\kappa_p(v)$ on $V_A^{(0)}$ varies like a quotient of two linear polynomials in $p$. 
The result then follows immediately from \cite[Theorem 4.6]{CalWa:quasipoly} by considering the linear map from compatible vectors in $V_A^{(0)}\times V_B^{(0)}$ to $B_1^H(F_2;\R)$, whose image, by construction, is precisely the subspace spanned by $w_1, \ldots, w_k$. The image of the set of vectors with $\chi(v)\le1$ is then the unit norm ball in this subspace.\end{proof}

\begin{cor}\label{quasirationalscl} For $p$ and $q$ sufficiently large $\scl_{p,q}(w)$ is a quasirational function in $p$ and $q$ for any fixed chain $w\in B_1^H(F_2;\R)$.
\end{cor}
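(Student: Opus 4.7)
The plan is to reduce directly to Theorem \ref{convergence} by specializing it to the one-dimensional rational subspace $\R w \subset B_1^H(F_2;\R)$ spanned by the single fixed chain $w$. First I would apply Theorem \ref{convergence} with $k=1$ and $w_1 = w$: this produces the unit ball $B_{p,q}$ for the seminorm $\|\cdot\|_{p,q}$ restricted to $\R w$, and the theorem asserts that its vertices have coordinates that are eventually quasirational in $p$ and $q$.

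Next I would identify the vertex coordinate explicitly and invert. In the line $\R w$, the unit ball is the symmetric interval whose positive endpoint is the vector $w/\|w\|_{p,q} = w/\scl_{p,q}(w)$, so the nontrivial coordinate of the vertex in the basis $\{w\}$ is precisely $1/\scl_{p,q}(w)$. By the above application of Theorem \ref{convergence}, this quantity is a ratio of two quasipolynomials in $(p,q)$ for $p,q$ sufficiently large. Since the class of quasirational functions is closed under inversion --- if $P$ and $Q$ are quasipolynomials in $(p,q)$, then the reciprocal of $P/Q$ is $Q/P$, which is again a ratio of quasipolynomials --- the function $\scl_{p,q}(w)$ is itself quasirational, as desired.

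The main obstacle I anticipate is the degenerate case in which $\scl_{p,q}(w)$ vanishes for some $(p,q)$, as the reciprocal step is then meaningless. Walker's convergence theorem from \cite{Walker:freecyclic} tells us that $\|\cdot\|_{p,q} \to \scl_{F_2}(\cdot)$ as $p,q \to \infty$, so when $\scl_{F_2}(w) > 0$ the norm $\scl_{p,q}(w)$ stays bounded below by a positive constant for all sufficiently large $p,q$ and the reciprocal argument applies uniformly. When $\scl_{F_2}(w) = 0$, I would instead argue using the QIQ structure of the disc polytopes $\mathcal{D}_A(p)$ and $\mathcal{D}_B(q)$ supplied by Lemma \ref{quasilinear} and Corollary \ref{coneQIQ}: the extremal rays of the admissible cone depend quasilinearly on $(p,q)$, so whenever $w$ admits a projective representation by disc vectors for one value of $(p,q)$ in a residue class it does so for all sufficiently large values in the same class. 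Thus $\scl_{p,q}(w)$ is eventually identically zero on each residue class of $(p,q)$ modulo the common period, and the zero function is trivially quasirational.
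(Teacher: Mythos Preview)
Your approach is exactly the paper's: apply Theorem \ref{convergence} with $k=1$, which is literally all the paper says. Your extra observation that the vertex coordinate is $1/\scl_{p,q}(w)$ and that quasirational functions are closed under inversion is a detail the paper leaves implicit, and it is correct.

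Your handling of the degenerate case is more elaborate than necessary, and the QIQ argument you sketch there is not quite rigorous as stated (admitting a ``projective representation by disc vectors'' is not the same thing as having $\scl=0$). A cleaner route: the map $F_2 \twoheadrightarrow \gspan{a,b \mid a^p=b^q}$ is a surjection, and $\scl$ is monotone non-increasing under surjections, so $\scl_{p,q}(w) \le \scl_{F_2}(w)$ for every $p,q$. Hence if $\scl_{F_2}(w)=0$ then $\scl_{p,q}(w)=0$ for all $p,q$, which is trivially quasirational; no appeal to QIQ structure or residue classes is needed.
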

\begin{proof}
This follows from Theorem \ref{convergence} by considering the case $k=1$.
\end{proof}

Letting $G=\ast_{\Z^k} A_i$, for $\{A_i\}_{i=1}^s$ a family of free Abelian groups of rank $r_i$, $r=\sum r_i$ and  $w\in F_r$. Then, as in the presentation for $G$ in section 2, the group is determined up to isomorphism by a finite set of parameters, $p_{i,j}$ coming from the relations $a_{1,j}^{p_{1,j}}=\cdots=a_{s,j}^{p_{s,j}}$.

\begin{cor}\label{qrMAFP}  For $p_{i,j}$ sufficiently large, $\scl_G(w)$ depends quasirationally on $p_{i,j}$. \end{cor}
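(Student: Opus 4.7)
The plan is to mirror the strategy of Corollary \ref{quasirationalscl}, now with the multi-parameter family $\vec{p} = (p_{i,j})_{i,j}$ replacing the two-variable pair $(p,q)$, and iterating the construction over all $s$ tori in the graph of spaces. By Theorem \ref{MAFP}, $\scl_G(w)$ is the value of a linear program that minimizes $\sum_{i=1}^s (-\chi_o^{A_i}(v_i))/2$ over tuples of compatible rational vectors $(v_1,\ldots,v_s)$, with $v_i\in V_{A_i}(\vec p)$, whose boundary represents $w$. The compatibility equations between consecutive tori and the linear subspace cut out by the boundary constraint depend only on the combinatorial structure of $w$, and thus are \emph{independent} of $\vec p$. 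All dependence on $\vec p$ is concentrated in the cones $V_{A_i}(\vec p)$ and their Klein functions $\kappa_{A_i,\vec p}$.

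The first step is to upgrade Lemma \ref{quasilinear} to the present setting: the extremal integral rays of each cone $V_{A_i}(\vec p)$ have coordinates which are quasilinear in $\vec p$. The characterization of extremal rays from \cite[Lemma 4.11]{SSS} produces three cases, and only the third involves a correction by an element of the amalgamating subgroup; in the $\Z^k$ setting this correction is an integer combination $\sum_j v_{\phi,j}\, a_{i,j}^{p_{i,j}}$ chosen to kill $h(\phi)\in\Z^k$ coordinate-by-coordinate, so each $v_{\phi,j}$ is quasilinear in $p_{i,j}$ with period dividing the $j$-th coordinate of $h(\phi)$. The arguments of the first two cases are unchanged because they are $\vec p$-independent.

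Next, since the combinatorial type of $V_{A_i}$ and of the disc faces is $\vec p$-independent, Corollary \ref{coneQIQ} (the multi-parameter version is an immediate consequence of \cite[Theorem 3.5, Corollary 3.7]{CalWa:quasipoly}) implies that the integer hulls $\mathrm{conv}(\mathcal{D}_{A_i}(\vec p) + \overline{V_{A_i}}(\vec p))$ have vertices whose coordinates are quasipolynomials of degree one in $\vec p$. By Corollary \ref{kappa}, this forces $\kappa_{A_i,\vec p}$, and hence $\chi_o^{A_i}$, to be the minimum of finitely many linear functionals whose coefficients are quotients of linear quasipolynomials in $\vec p$; the combinatorial types of the finitely many regions of linearity are themselves constant in $\vec p$ for $\vec p$ sufficiently large.

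Finally, apply \cite[Theorem 4.6]{CalWa:quasipoly} exactly as in the proof of Theorem \ref{convergence}: the linear program that computes $\scl_G(w)$ runs over a fixed (i.e. $\vec p$-independent) polytope of compatible vectors, optimizing an objective whose coefficients are quasirational in $\vec p$, so its optimum is itself quasirational for $\vec p$ sufficiently large. Taking $k=1$ for each coordinate of the optimum, or applying the result directly for the scalar $\scl$, yields the conclusion. The main obstacle to this plan is the multi-parameter version of Lemma \ref{quasilinear}: one must verify that when several amalgamating relations are scaled simultaneously, the ``winding number'' corrections producing extremal rays remain jointly quasilinear in all $p_{i,j}$; this is the step where one genuinely uses that the $a_{i,j}^{p_{i,j}}$ contribute independently to $h$ so that the required congruences in the different coordinates can be solved separately.
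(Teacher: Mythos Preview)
The paper gives no explicit proof of Corollary \ref{qrMAFP}; it is stated as an immediate generalization of Theorem \ref{convergence} and Corollary \ref{quasirationalscl} to the multi-factor, multi-parameter setting of Theorem \ref{MAFP}. Your proposal correctly spells out exactly this implicit argument --- upgrading Lemma \ref{quasilinear} coordinate-by-coordinate, invoking Corollary \ref{coneQIQ} and Corollary \ref{kappa} for each $V_{A_i}$, and then applying \cite[Theorem 4.6]{CalWa:quasipoly} as in Theorem \ref{convergence} --- so your approach is essentially the paper's own, just made explicit.
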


Further, we obtain the following statement about free products of cyclic groups.
\begin{cor}\label{qrcyclic} Let $G=\quotient{\Z}{p_1\Z}\ast\cdots\ast\quotient{\Z}{p_k\Z}$ and $w\in F_k$. Then for $p_i$ sufficiently large, $\scl_G(w)$ depends quasirationally on the $p_i$\end{cor}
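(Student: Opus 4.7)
The plan is to realize $G$ as a central quotient of a group handled by Theorem \ref{MAFP} and to pull the quasirationality in Corollary \ref{qrMAFP} back through Proposition \ref{isometry}. Concretely, I would set
\[
E \;=\; \langle a_1,\dots,a_k \mid a_1^{p_1}=a_2^{p_2}=\dots=a_k^{p_k}\rangle,
\]
which is the amalgamated free product of $k$ copies of $\Z$ over a common $\Z$ subgroup, i.e.\ exactly the case $A_i=\Z$, amalgamating rank $=1$, in Theorem \ref{MAFP}. Let $z=a_1^{p_1}=\cdots=a_k^{p_k}$; then $\langle z\rangle$ is central in $E$ and the quotient is precisely $G=\Z/p_1\Z\ast\cdots\ast\Z/p_k\Z$, giving a central extension $1\to\Z\xrightarrow{i}E\xrightarrow{\pi}G\to 1$.

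Next I would verify that $i(\Z)\cap[E,E]=\{1\}$. The abelianization $E^{\mathrm{ab}}$ is $\Z^k$ modulo the relations $p_i\bar a_i = p_j\bar a_j$, which has a free part of rank one on which $\bar z = p_i\bar a_i$ is a nonzero element of infinite order. Hence $z\notin[E,E]$ and, because $\langle z\rangle$ is infinite cyclic, $i(\Z)\cap[E,E]=\{1\}$. By Proposition \ref{torsion} the induced map $H^2(G;\R)\to H^2(E;\R)$ is injective, so by Proposition \ref{isometry} the projection $\pi\colon E\to G$ is an $\scl$-isometry. Thus for any $w\in F_k$ with lift $\tilde w\in E$ one has $\scl_G(w)=\scl_E(\tilde w)$.

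Now apply Corollary \ref{qrMAFP} to $E$: with the notation there, $E=\ast_{\Z}A_i$, each $A_i=\Z$, and the presentation parameters are exactly $p_1,\dots,p_k$. Hence for the $p_i$ sufficiently large, $\scl_E(\tilde w)$ is a quasirational function of $(p_1,\dots,p_k)$. Combining with the isometry of the previous step gives $\scl_G(w) = \scl_E(\tilde w)$, quasirational in the $p_i$.

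The only nontrivial step is verifying the hypothesis of Proposition \ref{isometry}, and the argument above reduces it to a one-line abelianization computation; everything else is bookkeeping. One small point worth flagging is that Proposition \ref{isometry} is stated in the excerpt for the $A\ast_{\Z^k}B$ setting, but the excerpt explicitly notes that it extends to the groups of Theorem \ref{MAFP}, which is the version being invoked here. As a sanity check, when $k=2$ the formula of Proposition \ref{formula} recovers a quasirational expression in $p_1,p_2$ compatible with Walker's formulas in \cite{Walker:freecyclic}, consistent with this argument.
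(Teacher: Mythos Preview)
Your proof is correct and follows essentially the same route as the paper: lift $G$ to the amalgam $E=\ast_{\Z}\Z$, invoke Proposition~\ref{isometry} (via Proposition~\ref{torsion}) to identify $\scl_G$ with $\scl_E$, and then apply Corollary~\ref{qrMAFP}. You supply more detail than the paper does---in particular the explicit abelianization check that $i(\Z)\cap[E,E]=\{1\}$---but the strategy is identical.
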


\begin{proof} The statement follows immediately from Corollary \ref{qrcyclic} by considering the projection $\ast_{\Z} \Z\to G$ and Proposition \ref{isometry}.\end{proof}

Notably, \cite{Walker:freecyclic} shows that the $\|\cdot\|_{p,q}$ norm converges to the standard $\scl$ norm for $F_2$ at least as fast as $O(\frac{1}{p}+\frac{1}{q})$. A similar statement holds for the $\scl$ norm for $F_k$. Corollary \ref{quasirationalscl} says that this is the slowest rate possible, though it is still an open question whether it is possible to obtain faster convergence.

\bibliography{Susse_SCLinAFP}

\end{document}